\documentclass[12pt,reqno]{amsart}
\usepackage{amsmath, amsthm, amssymb, stmaryrd}
\usepackage{hyperref}
\usepackage{enumerate}
\usepackage{url}




\let\OLDthebibliography\thebibliography
\renewcommand\thebibliography[1]{
  \OLDthebibliography{#1}
  \setlength{\parskip}{0pt}
  \setlength{\itemsep}{0pt plus 0.3ex}
}

\usepackage{mathtools}

\usepackage{multirow, array}
\usepackage{placeins}

\usepackage{caption} 
\captionsetup[table]{skip=5pt}

\topmargin 0cm
\advance \topmargin by -\headheight
\advance \topmargin by -\headsep
     
\setlength{\paperheight}{270mm}%
\setlength{\paperwidth}{192mm}%
\textheight 22.5cm
\oddsidemargin 0cm
\evensidemargin \oddsidemargin
\marginparwidth 1.25cm
\textwidth 14cm
\setlength{\parskip}{0.05cm}

\newtheorem{thm}{Theorem}[section]
\newtheorem{lemma}[thm]{Lemma}

\newtheorem{cor}[thm]{Corollary}
\newtheorem{conj}[thm]{Conjecture}

\theoremstyle{definition}

\theoremstyle{remark}

\numberwithin{equation}{section}

\newcommand{\mmod}[1]{{\,\,\mathrm{mod}\,\,#1}}

\newcommand*\wrapletters[1]{\wr@pletters#1\@nil}
\def\wr@pletters#1#2\@nil{#1\allowbreak\if&#2&\else\wr@pletters#2\@nil\fi}

\def\alp{{\alpha}} 
\def\bet{{\beta}}  
\def\gam{{\gamma}} 
\def\del{{\delta}}

\def\lam{{\lambda}} \def\Lam{{\Lambda}}

\def\eps{\varepsilon}

\def\le{\leqslant} \def\ge{\geqslant}

\def \bN {\mathbb N}

\def \bQ {\mathbb Q}
\def \bR {\mathbb R}
\def \bZ {\mathbb Z}


\def \cB {\mathcal B}

\def \cS {\mathcal S}

\def \det {\mathrm{det}}

\begin{document}
\title[Bohr sets and diophantine approximation]{Bohr sets and multiplicative diophantine approximation}
\author[Sam Chow]{Sam Chow}
\address{The Mathematical Sciences Research Institute, 17 Gauss Way, Berkeley, CA 94720-5070, United States}
\address{Department of Mathematics, University of York,
Heslington, York, YO10 5DD, United Kingdom}
\email{sam.chow@york.ac.uk}
\subjclass[2010]{11J83, 11J20, 11H06, 52C05}
\keywords{Metric diophantine approximation, geometry of numbers}
\thanks{}
\date{}
\begin{abstract} In two dimensions, Gallagher's theorem is a strengthening of the Littlewood conjecture that holds for almost all pairs of real numbers. We prove an inhomogeneous fibre version of Gallagher's theorem, sharpening and making unconditional a result recently obtained conditionally by Beresnevich, Haynes and Velani. The idea is to find large generalised arithmetic progressions within inhomogeneous Bohr sets, extending a construction given by Tao. This precise structure enables us to verify the hypotheses of the Duffin--Schaeffer theorem for the problem at hand, via the geometry of numbers.
\end{abstract}
\maketitle

\section{Introduction}
\label{intro}

\subsection{Results}

A famous conjecture by Littlewood \cite[\S 4.4]{BRV2016} asserts that if $\alp, \bet \in \bR$ then 
\[
\liminf_{n \to \infty} n \| n \alp \| \cdot \| n \bet \| = 0.
\]
However, Gallagher's theorem \cite{Gal1962} implies that for almost all pairs $(\alp, \bet) \in \bR^2$ the stronger statement
\begin{equation} \label{Gallagher}
\liminf_{n \to \infty} n (\log n)^2 \| n \alp \| \cdot \| n \bet \| = 0
\end{equation}
is valid. Beresnevich, Haynes and Velani \cite[Theorem 2.1 and Remark 2.4]{BHV2016} recently showed that for any $\alp \in \bR$ the statement \eqref{Gallagher} holds for almost all $\bet \in \bR$. On inhomogeneous fibres, they were able to establish the following conditional outcome \cite[Theorem 2.4]{BHV2016}.

\begin{thm}[Beresnevich--Haynes--Velani] \label{conditional} 
Let $\alp, \gam \in \bR$, and assume that $\alp$ is irrational and not Liouville. Further, assume the Duffin--Schaeffer conjecture (Conjecture \ref{DSconj}). Then for almost all $\bet \in \bR$ we have
\begin{equation} \label{Gallagher2}
\liminf_{n \to \infty} n (\log n)^2 \| n \alp - \gam\| \cdot \| n \bet \| = 0.
\end{equation}
\end{thm}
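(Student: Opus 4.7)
The plan is to apply the Duffin--Schaeffer conjecture with an approximation function tailored to $\alp, \gam$ so that $\|n\bet\| < \psi(n)$ directly implies the desired inequality. Setting
\[
\psi(n) = \min\!\left(\frac{1}{n (\log n)^2 \|n\alp - \gam\|},\, \tfrac12\right),
\]
every $n$ with $\|n\bet\| < \psi(n)$ satisfies $n (\log n)^2 \|n\alp - \gam\| \|n\bet\| < 1$. Rescaling $\psi$ by a small constant $1/M$ and intersecting the resulting full-measure sets over $M \in \bN$ upgrades ``infinitely often $<1$'' to $\liminf = 0$, since the Duffin--Schaeffer divergence condition is insensitive to a positive scalar factor. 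Thus everything reduces to verifying
\[
\sum_{n=1}^{\infty} \frac{\phi(n)}{n}\, \psi(n) = \infty.
\]

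For the lower bound, I would first invoke the non-Liouville hypothesis on $\alp$ to show that the truncation in the definition of $\psi$ affects only $O(\log N)$ integers $n \le N$, so up to negligible terms one may work with $\psi(n) = 1/(n (\log n)^2 \|n\alp - \gam\|)$. Next, by three-distance-theorem-style equidistribution of $n\alp - \gam \bmod 1$ (with quantitative discrepancy control coming from non-Liouvillicity), in each dyadic window $[M, 2M]$ one obtains $\sum_{M \le n < 2M} 1/\|n\alp - \gam\| \gg M \log M$. Dividing by the slowly-varying factor $n (\log n)^2 \asymp M (\log M)^2$ yields $\sum_{M \le n < 2M} \psi(n) \gg 1/\log M$, and summing dyadically up to $N$ produces a lower bound $\gg \log \log N$.

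The main obstacle is reinserting the totient factor $\phi(n)/n$, which can dip as low as $O(1/\log \log n)$ on integers with many small prime divisors and could a priori be anti-correlated with smallness of $\|n\alp - \gam\|$. I would handle this by restricting the Duffin--Schaeffer sum to the set $\cN_P = \{n : \gcd(n, Q_P) = 1\}$, where $Q_P = \prod_{p \le P} p$; on $\cN_P$ the pointwise bound $\phi(n)/n \ge \prod_{p \le P}(1 - 1/p) =: c_P > 0$ holds. Since $\cN_P$ is a union of arithmetic progressions modulo $Q_P$, one reparametrises each residue class $n \equiv a \pmod{Q_P}$ as $n = Q_P k + a$ and writes $n\alp - \gam = k(Q_P \alp) + (a\alp - \gam)$; as $Q_P \alp$ is still irrational and non-Liouville, the dyadic estimate $\sum 1/\|n\alp - \gam\| \gg M \log M$ persists on each class, and summing over classes $a$ coprime to $Q_P$ recovers the full bound up to a constant depending on $P$. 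This delivers $\sum \phi(n) \psi(n)/n \gg c_P \log \log N$, and the assumed Duffin--Schaeffer conjecture then produces $\|n\bet\| < \psi(n)$ infinitely often for almost every $\bet \in \bR$, completing the proof of \eqref{Gallagher2}.
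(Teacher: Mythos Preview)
Your overall architecture --- define $\Phi(n) \asymp 1/(n(\log n)^2 \|n\alp - \gam\|)$, verify \eqref{DShyp}, invoke the assumed Duffin--Schaeffer conjecture --- matches the Beresnevich--Haynes--Velani strategy as the paper reports it, and your truncation and $1/M$-rescaling are harmless. The gap is in your treatment of the totient weight. You assert that on $\cN_P = \{n : (n, Q_P) = 1\}$ one has the pointwise bound $\varphi(n)/n \ge c_P := \prod_{p \le P}(1-1/p)$, but this is false: membership in $\cN_P$ only forces every prime factor of $n$ to exceed $P$, so $\varphi(n)/n = \prod_{p \mid n,\, p > P}(1-1/p)$, which can be made arbitrarily small (take $n$ to be the product of all primes in $(P,X]$; by Mertens $\varphi(n)/n \sim (\log P)/(\log X) \to 0$ as $X \to \infty$). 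Restricting to $\cN_P$ therefore yields no uniform positive lower bound for $\varphi(n)/n$, and the subsequent reparametrisation $n = Q_P k + a$ does nothing further, since $\varphi(Q_P k + a)/(Q_P k + a)$ still fluctuates freely in $k$. The argument breaks at this point, and the possible anti-correlation you flag between small $\varphi(n)/n$ and small $\|n\alp - \gam\|$ remains unaddressed.

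This is precisely the obstruction the present paper is built to overcome, and it does so by a structural rather than sieve-type route: Lemma~\ref{L1} locates a large proper two-dimensional generalised arithmetic progression inside the Bohr set $N^*_\gam(\alp, \rho)$, and then AM--GM combined with lattice-point counting (\S\ref{average}, culminating in Corollary~\ref{RightSize}) shows that $\varphi(n)/n$ averages to $\gg 1$ over that progression. The required divergence of $\sum_n \varphi(n)\Psi(n)/n$ then follows via Lemma~\ref{lower} and the partial summation in \S\ref{DS}. Your proposal supplies no substitute for this averaging step.
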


Here we recall that an irrational real number $\alpha$ is \emph{Liouville} if
\[
\liminf_{n \to \infty} n^w \| n \alpha \| = 0
\]
for all $w > 0$. We prove the result unconditionally.

\begin{thm} \label{MainThm} Let $\alp, \gam \in \bR$, and assume that $\alp$ is irrational and not Liouville. Then for almost all $\bet \in \bR$ we have \eqref{Gallagher2}.
\end{thm}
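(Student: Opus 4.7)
The plan is to recast \eqref{Gallagher2} as a one-dimensional metric Diophantine problem for $\bet$ and to verify the hypotheses of the Duffin--Schaeffer theorem (in its classical unconditional form, which requires an Erd\H{o}s-type auxiliary condition) by singling out a highly structured set of approximating denominators inside inhomogeneous Bohr sets.

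First, fix a dyadic scale $Q=2^k$ and a suitable $\eps=\eps(Q)$, and consider the inhomogeneous Bohr set
\[
B=B(\alp,\gam;Q,\eps):=\{n\in\bN:Q/2<n\le Q,\ \|n\alp-\gam\|<\eps\}.
\]
Partitioning $B$ into layers $L_j:=\{n\in B:2^{-j-1}\le\|n\alp-\gam\|<2^{-j}\}$, the not-Liouville hypothesis on $\alp$ gives, via an effective discrepancy or three-distance estimate for $(n\alp\mod 1)$, that $|L_j|\asymp Q\cdot 2^{-j}$ in the relevant range of $j$. On $L_j$ the target inequality becomes $\|n\bet\|<\psi(n)$ with $\psi(n)\asymp 2^j/(Q(\log Q)^2)$, so summing over layers and dyadic scales produces a divergent series, matching the Duffin--Schaeffer divergence condition.

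The heart of the argument is to verify the auxiliary hypothesis by replacing each $L_j$ by an arithmetically structured subset. Extending Tao's construction from the homogeneous case $\gam=0$, I would, via Minkowski's second theorem applied to an explicit lattice built from $\alp$ and a denominator of scale $Q$, produce a generalised arithmetic progression
\[
P_j=\{n_0+x_1 n_1+\cdots+x_d n_d:0\le x_i<N_i\}
\]
of bounded rank $d$, with $P_j\subseteq L_j$ and $|P_j|\gg|L_j|$. The inhomogeneous shift $\gam$ is absorbed into the base point $n_0$, while the step sizes $n_1,\dots,n_d$ are supplied by the successive minima. Inside $P_j$ the divisibility by any prime $p$ is controlled: the set $\{n\in P_j:p\mid n\}$ is itself a sub-GAP of density $\asymp 1/p$ except for a bounded set of primes dividing the basis, so $\sum_{n\in P_j}\phi(n)/n\gg|P_j|$. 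A standard quasi-independence argument then verifies the Erd\H{o}s--Duffin--Schaeffer condition for $\tilde\psi:=\psi\,\mathbf{1}_P$ where $P=\bigsqcup_{j,k} P_j$, and the Duffin--Schaeffer theorem delivers the full-measure conclusion for almost every $\bet$.

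The main obstacle is the construction of the generalised arithmetic progression $P_j$ inside the inhomogeneous Bohr set while retaining both a positive density inside $L_j$ and the arithmetic regularity needed for the $\phi(n)/n$ estimate. Tao's construction handles $\gam=0$; here one must translate a homogeneous GAP in the dual lattice by a shift induced by $\gam$ and show that the shift lands the GAP inside the inhomogeneous Bohr set without loss of size. Synchronising this with the quantitative not-Liouville input --- uniformly across the layers $L_j$, and with enough ``prime diversity'' to recover the Erd\H{o}s condition --- is where the technical difficulty of the proof is concentrated.
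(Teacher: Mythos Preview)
Your proposal is correct and follows essentially the same architecture as the paper: restrict to a structured subset of denominators inside inhomogeneous Bohr sets, extend Tao's homogeneous GAP construction by absorbing the shift $\gam$ into the base point, use the GAP structure to show $\varphi(n)/n\gg 1$ on average, and feed this into the Duffin--Schaeffer theorem. The implementation details differ slightly --- the paper builds the GAP explicitly of rank~$2$ from continued-fraction denominators and handles the inhomogeneous shift via the three distance theorem rather than Minkowski's second theorem, and it passes through AM--GM plus lattice-point counting (Lemma~\ref{SchmidtLemma}) instead of a direct sub-GAP density argument for each prime --- but these are interchangeable realisations of the same idea.
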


The assumption that $\alp$ is not Liouville is mild. Indeed, it follows from the Jarn\'ik--Besicovitch theorem \cite[Theorem 3.2]{BRV2016} that the set of Liouville numbers has Hausdorff dimension zero. That said, we believe the Liouville case to be more demanding, owing to the erratic behaviour of certain sums --- see \cite[Theorem 1.8]{BHV2016} and its proof. We expect the Liouville case to require a somewhat different method, and hope to address it in future work.

Note that the case $\gam = 0$ of Theorem 1.2 follows without any assumption on $\alp \in \bR$, for if $\alp$ is rational or Liouville then we plainly have the stronger result
\[
\liminf_{n \to \infty} n^2 \|n \alp \| = 0.
\]

We shall in fact prove the following strengthening of Theorem \ref{MainThm}.

\begin{thm} \label{main} 
Let $\alp, \gam \in \bR$, and assume that $\alp$ is irrational and not Liouville. Let $\psi: \bN \to \bR_{\ge 0}$ be a decreasing function such that 
\begin{equation} \label{divergence}
\sum_{n=1}^\infty \psi(n) \log n = \infty.
\end{equation}
Then for almost all $\bet \in \bR$ there exist infinitely many $n \in \bN$ such that
\[
\| n \alp - \gam \| \cdot \| n \bet \| < \psi(n).
\]
\end{thm}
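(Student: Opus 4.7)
The plan is to recast the problem as a one-dimensional inhomogeneous approximation problem in $\beta$ with a variable target $\Psi(n) := \psi(n)/\|n\alpha - \gamma\|$. Since $\|n\alpha - \gamma\| \cdot \|n\beta\| < \psi(n)$ is equivalent to $\|n\beta\| < \Psi(n)$, it suffices to establish a divergent Borel--Cantelli conclusion for the events $E_n := \{\beta \in [0,1] : \|n\beta\| < \Psi(n)\}$ as $n$ ranges over $\bN$. I would first dyadically decompose by level sets of $\|n\alpha - \gamma\|$: for dyadic scales $N \ge 1$ and $\eta \in [1/N, 1/2]$, consider the slices of the inhomogeneous Bohr set
\[
B(N, \eta) = \{N \le n < 2N : \|n \alpha - \gamma\| \in [\eta, 2\eta)\}.
\]
Equidistribution of $n\alpha\mmod{1}$ (using that $\alpha$ is irrational) gives $|B(N,\eta)| \asymp N\eta$, so for each block the total mass $\sum_{n \in B(N,\eta)} |E_n|$ is of order $N \psi(N)$. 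Summing over the $O(\log N)$ dyadic $\eta$-scales and then over dyadic $N$ reproduces $\sum_n \psi(n)\log n$ by Cauchy condensation, matching the divergence hypothesis \eqref{divergence} exactly.

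The heart of the proof is verifying quasi-independence of the events $\{E_n\}$ within each block, which can then be fed into a divergent Borel--Cantelli argument, or equivalently into the (unconditional) Duffin--Schaeffer theorem for a suitable approximation function $\Psi$, to conclude that almost every $\beta$ lies in $\limsup E_n$. Following the blueprint advertised in the abstract, I would extend Tao's construction to find a generalised arithmetic progression (GAP) of bounded rank inside the inhomogeneous Bohr set $B(N,\eta)$, with explicit control on its successive minima through the geometry of numbers. The GAP structure organises the good $n$ into a lattice-like object, yielding sharp pairwise overlap estimates of the form $|E_m \cap E_n| \lesssim |E_m| \cdot |E_n|$ on average over $m, n$ in the progression, via standard facts on the distribution of $k\beta\mmod{1}$ when $k = m - n$ has a prescribed factorisation in terms of the GAP generators.

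The main technical obstacle is the step of embedding a GAP inside an \emph{inhomogeneous} Bohr set: Tao's original construction exploits that $\{n : \|n\alpha\| < \eta\}$ is closed under subtraction, a group-theoretic feature that is destroyed once we shift by $\gamma$. The natural workaround is to pass first to the difference set $B(N,\eta) - B(N,\eta)$, which returns to the homogeneous setting and admits a GAP by Tao, and then to translate that GAP back to $B(N,\eta)$ by a single base point. The non-Liouville hypothesis on $\alpha$ enters crucially here: it rules out pathologically thin scales $\eta \ll N^{-w}$ for a fixed $w > 0$, guaranteeing both that $B(N,\eta)$ is non-empty at the scales we need and that the successive minima of the resulting GAPs are quantitatively controlled, so that the matching of divergence and measure survives intact through the Borel--Cantelli step.
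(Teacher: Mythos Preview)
Your overall architecture matches the paper's: pass to the auxiliary function $\Psi(n)=\psi(n)/\|n\alpha-\gamma\|$, locate a generalised arithmetic progression inside the inhomogeneous Bohr set (via Tao's construction plus a translation), and feed everything into the Duffin--Schaeffer \emph{theorem} rather than the conjecture. Three concrete points, however, are either missing or would not go through as written.

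\textbf{The role of $\varphi$ is absent.} You say the heart of the matter is ``quasi-independence'' via average overlap bounds $|E_m\cap E_n|\lesssim |E_m|\,|E_n|$, or equivalently the Duffin--Schaeffer theorem. But the hypothesis of that theorem is
\[
\sum_{n\le N}\frac{\varphi(n)}{n}\,\Psi(n)\ \gg\ \sum_{n\le N}\Psi(n)
\]
along a subsequence of $N$, and the Euler totient never appears in your plan. The paper's real work (its \S3) is to show that $\sum_{n\in N^*_\gamma(\alpha,\rho)}\varphi(n)/n \asymp \rho N$; the GAP is used not for overlap estimates but to average $\varphi(n)/n$, via an AM--GM step that passes to a geometric mean, separates primes, and reduces to a lattice point count modulo each $p$. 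Your alternative of direct overlap estimates runs into the same arithmetic: $|E_m\cap E_n|$ is governed by $\gcd(m,n)$, and controlling gcd's of pairs drawn from a GAP is exactly the difficulty that the $\varphi(n)/n$ averaging is designed to absorb. Without this idea, the ``quasi-independence'' step is a restatement of the problem, not a solution.

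\textbf{No taming of $\Psi$.} You work with $\Psi$ on all of $\bN$. The paper sets $\Psi(n)=0$ on the set $B=\{n:\|n\alpha-\gamma\|<n^{-4\eps}\}$, and this truncation is essential: a single $n$ with $\|n\alpha-\gamma\|$ abnormally small can make $\sum_{n\le N}\Psi(n)$ much larger than $N\log N$, wrecking the ratio hypothesis above. Your restriction $\eta\in[1/N,1/2]$ is a gesture in this direction, but it is not strong enough and is not actually imposed on the approximating function you hand to Duffin--Schaeffer.

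\textbf{The counting claim is too casual.} ``Equidistribution of $n\alpha\bmod 1$ (using that $\alpha$ is irrational) gives $|B(N,\eta)|\asymp N\eta$'' is false at the scales you need: equidistribution gives this only for $\eta$ fixed and $N\to\infty$, whereas you take $\eta$ down to $1/N$. The paper gets the upper bound $\#N_\gamma(\alpha,\rho)\ll\rho N$ from continued fractions together with the non-Liouville assumption, in a restricted range of $\rho$; the non-Liouville hypothesis is needed here and not only, as you suggest, to ensure non-emptiness.
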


We know at least in some cases that the assumption \eqref{divergence} is necessary, as demonstrated in \cite[Corollary 2.1]{BHV2016} given below. Here we recall that $\alp \in \bR$ is \emph{badly approximable} if 
\[
\liminf_{n \to \infty} n \| n \alp \| > 0.
\]

\begin{thm}[Beresnevich--Haynes--Velani] \label{convergence} Let $\alp$ be a badly approximable number, let $\del \in \bR$, and let $\psi: \bN \to \bR_{\ge 0}$ be a decreasing function such that
\[
\sum_{n=1}^\infty \psi(n) \log n < \infty.
\]
Then for almost all $\bet \in \bR$ the inequality
\[
\| n \alp \| \cdot \| n \bet - \del \| < \psi(n)
\]
holds for only finitely many $n \in \bN$.
\end{thm}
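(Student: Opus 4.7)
The plan is a direct application of the convergence Borel--Cantelli lemma. For each $n \in \bN$, define
\[
A_n := \{\bet \in [0,1] : \|n\alp\| \cdot \|n\bet - \del\| < \psi(n)\};
\]
the theorem reduces to showing $\sum_n |A_n| < \infty$. For fixed $n$, the set of $\bet \in [0,1]$ satisfying $\|n\bet - \del\| < r$ is a union of at most $n+2$ arcs of length $2r/n$, so has total measure $\ll r$. Applying this with $r = \psi(n)/\|n\alp\|$ gives
\[
|A_n| \ll \min\!\left(1, \frac{\psi(n)}{\|n\alp\|}\right).
\]

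The key arithmetic input will be the classical bound
\[
S(N) := \sum_{n=1}^N \frac{1}{\|n\alp\|} \ll_{\alp} N \log N,
\]
available because $\alp$ is badly approximable. To derive it I would invoke the three-distance theorem: for each $N$, the points $n\alp \bmod 1$ with $1 \le n \le N$, sorted along $\bR/\bZ$, have successive gaps taking at most three distinct values, each comparable to $1/N$ with an implied constant controlled by $\sup_k a_k(\alp) < \infty$. Consequently the multiset $\{\|n\alp\| : n \le N\}$ has bounded multiplicity when compared term by term to $\{k/N : 1 \le k \le N/2\}$, and $S(N) \ll_\alp N \sum_{k \le N/2} k^{-1} \ll_\alp N \log N$ follows.

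The finish is a double Abel summation. Since $\psi$ is decreasing,
\[
\sum_{n=1}^N \frac{\psi(n)}{\|n\alp\|} = \psi(N) S(N) + \sum_{n=1}^{N-1} (\psi(n) - \psi(n+1)) S(n),
\]
and a second summation by parts in the tail (using $n\log n - (n-1)\log(n-1) \ll \log n$) yields
\[
\sum_{n=1}^N \frac{\psi(n)}{\|n\alp\|} \ll_{\alp} \psi(N) N \log N + \sum_{n \le N} \psi(n) \log n + 1.
\]
The hypothesis $\sum_n \psi(n) \log n < \infty$ bounds the second term uniformly in $N$, and combined with monotonicity of $\psi$ it also forces $\psi(N) N \log N \to 0$: the vanishing tail $\sum_{n \ge N} \psi(n) \log n$ dominates $N \psi(2N) \log N$. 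Hence $\sum_n |A_n| < \infty$, Borel--Cantelli concludes, and the only step demanding real care is the partial-summation bookkeeping. No independence-type input is needed --- in the convergence regime the $A_n$ are individually small enough to sum absolutely, in sharp contrast to the divergence direction of Theorems \ref{MainThm} and \ref{main}.
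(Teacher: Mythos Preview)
The paper does not prove this theorem; it is quoted from \cite[Corollary 2.1]{BHV2016} as background. Your Borel--Cantelli argument is correct and is precisely the natural route. The measure estimate $|A_n| \ll \min(1,\psi(n)/\|n\alp\|)$ is immediate, the bound $S(N) \ll_\alp N\log N$ for badly approximable $\alp$ is classical (your three-distance sketch is adequate, and this is also a special case of the reciprocal-sum estimates in \cite{BHV2016}), and the two partial summations are carried out correctly. The tail argument showing $\psi(N)N\log N \to 0$ via $\sum_{N \le n \le 2N}\psi(n)\log n \ge N\psi(2N)\log N$ is fine.

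One cosmetic point: in your first Abel summation you write $\psi(N)S(N)$ as the boundary term, which is correct, but later the paper (in \S\ref{DS}, for the divergence direction) writes the analogous boundary term as $\psi(N+1)T_N$; either convention works since the difference is a single term. Your argument is in fact the convergence-side mirror of the partial-summation manipulations in \S\ref{DS}, with Lemma~\ref{upper} replaced by the homogeneous bound $S(N) \ll N\log N$.
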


In particular, if we set $\gam = 0$ in Theorem \ref{divergence} and $\del = 0$ in Theorem \ref{convergence}, we obtain the following dichotomy.

\begin{cor}
Let $\alp$ be a badly approximable number, and let $\psi: \bN \to \bR_{\ge 0}$ be a decreasing function. Then the measure of the set
\[
\{ \bet \in [0,1]: \| n \alp \| \cdot \| n \bet \| < \psi(n) \text{ for infinitely many } n \in \bN \}
\]
is
\[
\begin{cases} 
0, &\text{if } \sum_{n=1}^\infty \psi(n) \log n < \infty \\
1, &\text{if } \sum_{n=1}^\infty \psi(n) \log n = \infty.
\end{cases}
\]
\end{cor}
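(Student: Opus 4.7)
The plan is to deduce the corollary immediately from Theorems \ref{main} and \ref{convergence}, specialised to $\gam = 0$ and $\del = 0$ respectively. Once the hypotheses of Theorem \ref{main} have been verified in the badly approximable setting, there is essentially nothing left to do: the two theorems literally cover the two branches of the dichotomy.

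The only verification needed is that every badly approximable $\alp \in \bR$ is both irrational and not Liouville, so that Theorem \ref{main} applies. Irrationality is immediate, since if $\alp = p/q \in \bQ$ then taking $n = q$ gives $n \|n\alp\| = 0$, contradicting $\liminf_n n\|n\alp\| > 0$. Non-Liouvillianness is equally quick: taking $w = 1$ in the definition of Liouville, a Liouville number would satisfy $\liminf_n n \|n\alp\| = 0$, again contradicting bad approximability. Hence $\alp$ meets the hypotheses of Theorem \ref{main}.

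With this in hand, the divergence case $\sum_n \psi(n) \log n = \infty$ is handled by invoking Theorem \ref{main} with $\gam = 0$, which yields that for almost every $\bet \in \bR$ there are infinitely many $n$ with $\|n\alp\| \cdot \|n\bet\| < \psi(n)$; restricting to $\bet \in [0,1]$ gives full measure. The convergence case $\sum_n \psi(n)\log n < \infty$ is handled by invoking Theorem \ref{convergence} with $\del = 0$, which gives directly that for almost every $\bet \in \bR$ only finitely many $n$ satisfy the same inequality, hence measure zero in $[0,1]$.

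There is no real obstacle: the corollary is simply a clean packaging of the two preceding theorems, and the sole substantive remark is the brief observation that bad approximability rules out both rationality and Liouvillianness, thereby making Theorem \ref{main} applicable.
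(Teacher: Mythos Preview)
Your proposal is correct and matches the paper's approach exactly: the corollary is stated there as an immediate consequence of setting $\gam = 0$ in Theorem~\ref{main} and $\del = 0$ in Theorem~\ref{convergence}. Your added verification that a badly approximable $\alp$ is irrational and non-Liouville is a harmless (and accurate) elaboration; the paper handles this implicitly via the remark preceding Theorem~\ref{main} that the case $\gam = 0$ holds for \emph{all} $\alp$, since rational or Liouville $\alp$ already satisfy $\liminf_n n^2\|n\alp\| = 0$.
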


Some readers may wonder about the corresponding Hausdorff theory. This appears to be less interesting: as discussed in \cite[\S1]{BV2015}, genuine `fractal' Hausdorff measures are insensitive to the multiplicative nature of such problems.

If we knew an inhomogeneous version of the Duffin--Schaeffer theorem, then the following assertion would follow from our method.

\begin{conj} \label{cond} Let $\alp, \gam, \del \in \bR$, and assume that $\alp$ is irrational and not Liouville.  Let $\psi: \bN \to \bR_{\ge 0}$ be a decreasing function satisfying \eqref{divergence}. Then for almost all $\bet \in \bR$ there exist infinitely many $n \in \bN$ such that
\[
\| n \alp - \gam \| \cdot \| n \bet - \del \| < \psi(n).
\]
\end{conj}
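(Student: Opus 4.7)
The plan is to follow the strategy used for Theorem \ref{main} verbatim, replacing the classical Duffin--Schaeffer theorem with its hypothetical inhomogeneous analogue at the final step; the Bohr set and GAP machinery driving the rest of the paper carries over without change. First I would effect a double dyadic decomposition of the problem: for $N = 2^j$ and $K = 2^k$ with $1 \le k \le j$ (the upper bound on $k$ being controlled by the non-Liouville hypothesis on $\alp$), partition the relevant integers into
\[
\cN_{j,k} = \{n \in [N, 2N) : \|n\alp - \gam\| \asymp K^{-1}\}.
\]
The inhomogeneous extension of Tao's Bohr-set construction developed in the paper should yield, for each admissible pair $(j,k)$, a generalised arithmetic progression $P_{j,k} \subseteq \cN_{j,k}$ of the expected cardinality $|P_{j,k}| \asymp N/K$, with generators under quantitative control.

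Next I would define the one-variable approximating function
\[
\Psi(n) = \begin{cases} K \psi(n), & n \in P_{j,k}, \\ 0, & \text{otherwise,} \end{cases}
\]
so that, for $n \in P_{j,k}$, the bound $\|n\bet - \del\| < \Psi(n)$ implies the target inequality $\|n\alp - \gam\| \cdot \|n\bet - \del\| < \psi(n)$. Appealing to the assumed inhomogeneous Duffin--Schaeffer theorem with shift $\del$, the conclusion reduces to verifying the divergence
\[
\sum_n \Psi(n) \frac{\phi(n)}{n} = \infty.
\]
The contribution of $P_{j,k}$ to this sum, modulo an average of $\phi(n)/n$ over the GAP, is of order $|P_{j,k}| \cdot K \psi(N) \asymp N \psi(N)$; summing over $k = 1, \dots, j$ and then over $j \ge 1$ recovers, via monotonicity of $\psi$, the quantity $\sum_n \psi(n) \log n$, which diverges by hypothesis.

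The principal obstacle is obtaining a uniform positive lower bound for the average of $\phi(n)/n$ over the generalised arithmetic progression $P_{j,k}$. For a one-dimensional progression $a + d\bZ$ this reduces via standard sieve-theoretic considerations to controlling $\gcd(a,d)$; for a higher-rank GAP one must argue more carefully, but the structured nature of $P_{j,k}$ together with the quantitative control of its generators (afforded by the non-Liouville hypothesis) makes such a lower bound tractable. Since the classical Duffin--Schaeffer theorem invoked in Theorem \ref{main} carries the same $\phi(n)/n$ weighting, this step must already be handled there, and no further arithmetic input is required to pass from Theorem \ref{main} to Conjecture \ref{cond} beyond the assumed inhomogeneous Duffin--Schaeffer theorem itself.
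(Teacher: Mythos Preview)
The paper does not prove this statement: it is Conjecture~\ref{cond}, and the paper only asserts that it \emph{would} follow from the method of Theorem~\ref{main} together with the (conjectural) inhomogeneous Duffin--Schaeffer theorem. So your overall plan---run the argument of \S\S\ref{Bohr}--\ref{DS} and swap Theorem~\ref{DSthm} for its inhomogeneous analogue at the end---is exactly what the paper has in mind, and your sketch is in the right spirit.

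That said, there is a real gap in your write-up. You say the conclusion ``reduces to verifying the divergence $\sum_n \Psi(n)\varphi(n)/n = \infty$''. But the inhomogeneous Duffin--Schaeffer theorem, as formulated in the paper, has \emph{two} hypotheses: the divergence \eqref{DShyp} \emph{and} the comparison \eqref{additional}. The whole point of \S\ref{DS} (and the reason \S\ref{average} exists) is to check \eqref{additional}, i.e.\ to show that $\sum_{n \le N} \Psi(n)\varphi(n)/n \gg \sum_{n \le N} \Psi(n)$; the paper does this via the two-sided estimate $T_N(\alp,\gam) \asymp T_N^*(\alp,\gam) \asymp N \log N$. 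In your setup this amounts to the lower bound for the average of $\varphi(n)/n$ over $P_{j,k}$ that you discuss in your last paragraph, so you have the ingredient but have not applied it where it is actually needed. You should state and verify \eqref{additional} explicitly.

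There are also smaller inaccuracies. Your dyadic range $1 \le k \le j$ is not what the paper's machinery supports: Lemma~\ref{L1} requires $N^{-2\eps} \le \rho \le N^{-\eps}$, so only $k$ in a window of width $\asymp \eps \log N$ is admissible (still enough to recover the $\log n$ factor). And since the Bohr sets $N_\gam(\alp,\rho)$ are nested, your GAPs $P_{j,k}$ for different $k$ may overlap, leaving $\Psi$ ill-defined as written; the paper circumvents this by working with annuli $N^*_\gam(\alp,\rho) \setminus N^*_\gam(\alp,\del\rho)$ in Lemma~\ref{lower}, and more simply by supporting $\Psi$ on the complement of a single bad set $B$ rather than on a union of GAPs. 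Adopting that definition of $\Psi$ would streamline your argument and make the passage from Theorem~\ref{main} to Conjecture~\ref{cond} genuinely a one-line substitution.
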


It would follow, for instance, if we knew the following.

\begin{conj} [Inhomogeneous Duffin--Schaeffer theorem]
Let $\del \in \bR$ and $\Phi: \bN \to \bR_{\ge 0}$. Assume
\begin{equation} \label{DShyp}
\sum_{n=1}^\infty \frac{\varphi(n)}n \Phi(n) = \infty
\end{equation}
and 
\begin{equation} \label{additional}
\limsup_{N\to \infty} \Bigl(\sum_{n \le N} \frac{\varphi(n)}n \Phi(n) \Bigr)
\Bigl(\sum_{n\le N} \Phi(n)\Bigr)^{-1} > 0.
\end{equation}
Then for almost all $\bet \in \bR$ there exist infinitely many $n \in \bN$ such that
\[
\| n \bet - \del \| < \Phi(n).
\]
\end{conj}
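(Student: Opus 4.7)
The approach is to mimic the classical Erd\H{o}s--Vaaler--Pollington proof of the Duffin--Schaeffer theorem under Gallagher's extra-divergence hypothesis \eqref{additional}, carrying the inhomogeneous shift $\del$ through the argument. Work on the circle $\bT = \bR/\bZ$, reduce to $\Phi(n) \le 1/2$ by truncation, and define
\[
E_n = \bigcup_{\substack{1 \le a \le n \\ (a,n) = 1}} \bigl\{ \bet \in \bT : |n \bet - a - \del| < \Phi(n) \bigr\},
\]
so that $|E_n| = 2 \varphi(n) \Phi(n)/n$ and \eqref{DShyp} gives $\sum_n |E_n| = \infty$. We want to show $|\limsup_n E_n| = 1$. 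An inhomogeneous variant of Gallagher's zero--one law should reduce this to showing that the measure is merely positive, and the Chung--Erd\H{o}s inequality then reduces that in turn to the quasi-independence bound
\[
\sum_{m, n \le N} |E_m \cap E_n| \ll \Bigl( \sum_{n \le N} |E_n| \Bigr)^2
\]
for infinitely many $N$.

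The crux is the overlap estimate. For $\bet \in E_m \cap E_n$ there exist $a, b$ with $(a, m) = (b, n) = 1$ witnessing both inequalities; subtracting eliminates $\del$ and yields
\[
|(m - n) \bet - (a - b)| < \Phi(m) + \Phi(n),
\]
a homogeneous approximation condition governed by $m - n$. Partitioning the double sum over $(m, n)$ according to $d = \gcd(m, n)$ and applying the classical Erd\H{o}s--Vaaler bookkeeping, one would then bound $|E_m \cap E_n|$ by the expected product $|E_m||E_n|$ times local factors of the shape $\varphi(d)/d$ that are, on average, controlled by \eqref{additional}. This is precisely the condition under which the extra-divergence strategy converts $\sum_n |E_n| = \infty$ into the required quasi-independence.

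The principal obstacle is the loss of translation invariance in the inhomogeneous setting. Although subtracting the two defining inequalities cancels $\del$, doing so couples the coprimality conditions on $a$ and $b$ through a shared residue class, and controlling the ensuing sums over common divisors uniformly in $\del$ is genuinely delicate. Adapting either the Pollington--Vaughan overlap machinery or the more recent Koukoulopoulos--Maynard GCD-graph framework to handle the inhomogeneous shift robustly---while still benefiting from \eqref{additional}---is where the real work lies, and is the reason this appears as a conjecture in the paper rather than a theorem.
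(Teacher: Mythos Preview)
The statement you are addressing is a \emph{conjecture} in the paper, not a theorem; the paper offers no proof, and indeed remarks explicitly that ``in the inhomogeneous setting, we do not at present even have an analogue of Gallagher's zero--one law''. So there is nothing in the paper to compare your argument against, and your final paragraph correctly recognises that you are sketching a wish list rather than a proof.

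That said, there are two concrete gaps worth naming. First, you invoke ``an inhomogeneous variant of Gallagher's zero--one law'' to pass from positive measure to full measure. No such result is currently known, and the paper singles this out as a genuine missing ingredient; without it, the Chung--Erd\H{o}s route at best yields a set of positive measure, which is not the stated conclusion. Second, your elimination of $\del$ by subtraction is algebraically valid but does not feed into the overlap estimate you actually need. The quantity $|E_m \cap E_n|$ is governed by how many pairs $(a,b)$ with $(a,m)=(b,n)=1$ produce overlapping arcs, and those arcs are centred at $(a+\del)/m$ and $(b+\del)/n$; the relevant distance is
\[
\frac{a+\del}{m} - \frac{b+\del}{n} = \frac{na - mb + (n-m)\del}{mn},
\]
in which $\del$ does \emph{not} cancel. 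Your subtraction instead produces a constraint on $(m-n)\bet$ that discards both the coprimality of $a,b$ and the arithmetic of $mn$, so the ``classical Erd\H{o}s--Vaaler bookkeeping'' does not apply to what remains. Controlling the genuine overlap uniformly in $\del$ is exactly the difficulty the paper alludes to, and your sketch does not make contact with it.
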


Here $\varphi$ is the Euler totient function. The statement above should be compared to Theorem \ref{DSthm}. There is little consensus over what the `right' statement of the inhomogeneous Duffin--Schaeffer theorem should be, but there is some relevant discussion in \cite{Ram2016}. In the inhomogeneous setting, we do not at present even have an analogue of Gallagher's zero-one law \cite{Gal1961}. Our statement is partly motivated by a random model being developed by Ram\'{i}rez \cite{Ram2017}; such a framework could potentially transfer to the inhomogeneous setting.

\subsection{The method}

Consider the auxiliary approximating function $\Phi = \Phi_\alp^\gam$ given by
\begin{equation} \label{PhiDef}
\Phi(n) = \frac{\psi(n)}{\|n \alp - \gam \|}.
\end{equation}
The conclusion of Theorem \ref{main} is equivalent to the assertion that for almost all $\bet \in \bR$ there exist infinitely many $n \in \bN$ such that
\[
\| n \bet \| < \Phi(n).
\]
If $\Phi$ were monotonic, then Khintchine's theorem \cite[Theorem 2.3]{BRV2016} would be a natural approach.

\begin{thm}[Khintchine's theorem]
Let $\Phi: \bN \to \bR_{\ge 0}$ be a monotonic function. Then the measure of the set
\[
\{ \bet \in [0,1]: \| n \bet \| < \Phi(n) \text{ for infinitely many } n \in \bN \}
\]
is
\[
\begin{cases} 
0, &\text{if } \sum_{n=1}^\infty \Phi(n) < \infty \\
1, &\text{if } \sum_{n=1}^\infty \Phi(n) = \infty.
\end{cases}
\]
\end{thm}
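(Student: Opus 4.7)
The plan is to treat the convergence and divergence halves separately. Define $E_n = \{\bet \in [0,1] : \|n\bet\| < \Phi(n)\}$, so the target set is $\limsup_n E_n$.

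For the convergence case, I would first observe that $E_n$ is the union of at most $n$ arcs of length $2\Phi(n)/n$ centred at the points $k/n$ for $0 \le k \le n$ (with the arcs at $0$ and $1$ identified on the circle $\bR / \bZ$). Hence $|E_n| \le 2\Phi(n)$, and the assumed convergence combined with the first Borel--Cantelli lemma immediately yields $|\limsup_n E_n| = 0$.

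For the divergence case, I would proceed in two steps. First, invoke a zero--one law: the set $\limsup_n E_n$ is invariant modulo null sets under rational translations (on restricting to $n$ lying in any fixed arithmetic progression, which preserves the divergence by monotonicity), so by a density argument or by Gallagher's zero--one law its measure is either $0$ or $1$. It therefore suffices to prove that the measure is positive. Second, apply the divergence form of Borel--Cantelli in the Chung--Erd\H os shape, which asks for the quasi-independence estimate
\[
\sum_{m, n \le N} |E_m \cap E_n| \ll \Bigl(\sum_{n \le N} |E_n|\Bigr)^2.
\]
To verify this, I would estimate $|E_m \cap E_n|$ for $m \ne n$ by setting $d = \gcd(m,n)$, parameterising the intersection by residues of $\bet$ modulo $1/m$ and $1/n$, and applying the Chinese remainder theorem. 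This produces a main term of size $\ll \Phi(m)\Phi(n)$ together with an off-diagonal error $\ll d \min(\Phi(m)/m, \Phi(n)/n)$. Grouping pairs $(m,n)$ by their common divisor $d$ and using monotonicity of $\Phi$ via a dyadic decomposition (so that on a dyadic block $N \le n < 2N$ one may replace $\Phi(n)$ by $\Phi(N)$ up to constants) then bounds the total error by a constant multiple of $\bigl(\sum_{n \le N} |E_n|\bigr)^2$.

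The principal obstacle is precisely this quasi-independence estimate and its delicate dependence on $\gcd(m,n)$. Monotonicity of $\Phi$ is indispensable here: without it, the off-diagonal contribution is uncontrolled and the conclusion can genuinely fail. This is the very motivation for the Duffin--Schaeffer conjecture, which replaces monotonicity by the coprimality-weighted hypothesis \eqref{DShyp}.
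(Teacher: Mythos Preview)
The paper does not prove Khintchine's theorem; it is quoted as a classical result with a reference to \cite[Theorem 2.3]{BRV2016} and serves only as motivation for why the non-monotonic auxiliary function $\Phi$ in \eqref{PhiDef} forces a harder argument. There is therefore no proof in the paper to compare your proposal against.

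That said, your outline is the standard textbook route (see for instance Harman \cite{Har1998}). The convergence half via the first Borel--Cantelli lemma is correct as written. For the divergence half, combining a zero--one law with the Chung--Erd\H os quasi-independence lemma and the overlap estimate
\[
|E_m \cap E_n| \ll \Phi(m)\Phi(n) + \gcd(m,n)\min\Bigl(\frac{\Phi(m)}{m},\frac{\Phi(n)}{n}\Bigr)
\]
is exactly the classical argument, and your remark that monotonicity is what tames the gcd-weighted error is the heart of the matter. One small caution: the assertion that $\limsup_n E_n$ is invariant under rational translations is not quite immediate in the form you state it, and the cleaner move is simply to invoke Gallagher's zero--one law \cite{Gal1961} directly, which you already list as an alternative.
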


For any $n \in \bN$ the function $\bet \mapsto \| n \bet \|$ is periodic modulo 1, so Khintchine's theorem implies that if $\Phi$ is monotonic and $\sum_{n=1}^\infty \Phi(n) = \infty$ then for almost all $\bet \in \bR$ the inequality $\| n \bet \| < \Phi(n)$ holds for infinitely many $n \in \bN$. The specific function $\Phi$ defined in \eqref{PhiDef} is very much \textbf{not} monotonic, so our task is much more demanding. Beresnevich--Haynes--Velani took a Duffin--Schaeffer conjecture \cite[p. 255]{DS1941} approach to establish Theorem \ref{conditional}.

\begin{conj}[Duffin--Schaeffer conjecture]\label{DSconj}
Let $\Phi: \bN \to \bR_{\ge 0}$ satisfy \eqref{DShyp}. Then for almost all $\bet \in \bR$ the inequality
\[
| n \bet - r | < \Phi(n)
\]
holds for infinitely many coprime pairs $(n,r) \in \bN \times \bZ$.
\end{conj}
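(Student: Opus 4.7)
The statement is the Duffin--Schaeffer conjecture, a long-standing open problem, so what follows is an outline of the only framework within which a proof plausibly proceeds rather than a genuine route to settlement. Set
\[
E_n = \{\bet \in [0,1] : |n\bet - r| < \Phi(n) \text{ for some } r \text{ with } \gcd(r,n) = 1\},
\]
a union of $\varphi(n)$ intervals of length $2\Phi(n)/n$; under \eqref{DShyp} we have $\sum_n \meas(E_n) = \infty$. By Gallagher's zero--one law \cite{Gal1961} the set $\limsup_n E_n$ has measure $0$ or $1$, so it suffices to prove its measure is positive. The divergence direction of the Borel--Cantelli lemma then reduces the problem to the quasi-independence estimate
\[
\sum_{m,n \le N} \meas(E_m \cap E_n) \ll \Bigl( \sum_{n \le N} \meas(E_n) \Bigr)^{2}.
\]

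The first step is to compute the overlap $\meas(E_m \cap E_n)$ by counting coprime pairs $(r,s)$ with $|rn - sm| < n\Phi(m) + m\Phi(n)$. One finds that the overlap is of the expected size $\meas(E_m)\meas(E_n)$ whenever $\gcd(m,n)$ is not too large relative to $mn$, so the entire problem collapses onto bounding the contribution of gcd-heavy pairs. That contribution is governed by a weighted GCD sum roughly of the shape
\[
\sum_{m,n \le N} \frac{\gcd(m,n)}{\lcm(m,n)} \, \varphi(m)\varphi(n)\Phi(m)\Phi(n),
\]
and any path to the conjecture must pass through an effective bound on such a sum in terms of $\bigl(\sum_{n\le N} \varphi(n)\Phi(n)/n\bigr)^{2}$.

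The main obstacle, and the reason the conjecture has remained open, is that no universal GCD-sum bound of the required strength exists: for $\Phi$ supported on highly composite integers the raw sum genuinely exceeds the square of the expected measure, so one cannot proceed by multiplicative-function estimates alone. The only plausible strategy is to dyadically decompose by the size of $\Phi$ and, within each dyadic block, iteratively compress to a subsequence of indices whose prime factorisations are in sufficiently ``general position'' with respect to one another, while retaining a positive proportion of the mass of $\varphi(n)\Phi(n)/n$. Executing such a compression uniformly across all dyadic scales simultaneously is the decisive difficulty, and it is precisely this step that the conditional hypothesis in Theorem \ref{conditional} of Beresnevich--Haynes--Velani bypasses.
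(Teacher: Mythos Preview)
The statement is presented in the paper as Conjecture~\ref{DSconj}, not as a theorem; the paper explicitly says the Duffin--Schaeffer conjecture ``is still wide open'' and offers no proof. You correctly identify this at the outset, and what you provide is not a proof but a survey of the overlap/quasi-independence framework. There is therefore no paper proof to compare your proposal against.

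That said, your framing is accurate as far as it goes: the sets $E_n$, Gallagher's zero--one law, the second-moment criterion, and the identification of the GCD-heavy pairs as the obstruction are all standard and correct. Your final paragraph is an honest acknowledgment that the compression step is the decisive missing ingredient, which is exactly why the paper treats the statement as conjectural and instead applies only the weaker Duffin--Schaeffer \emph{theorem} (Theorem~\ref{DSthm}, requiring the extra hypothesis~\eqref{additional}) to the specific approximating function~$\Psi$ constructed in~\S\ref{DS}. So there is no gap to point to beyond the one you already name: you have not proved the conjecture, and neither has the paper.
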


We note for comparison to Khintchine's theorem that if $\Phi$ is monotonic then the condition \eqref{DShyp} is equivalent to $\sum_{n=1}^\infty \Phi(n) = \infty$. Beresnevich--Haynes--Velani obtained their conditional result by confirming the hypothesis \eqref{DShyp} for the specific function \eqref{PhiDef}. 

The Duffin--Schaeffer conjecture has stimulated research in diophantine approximation for decades, and is still wide open. There has been some progress, including the Erd\H{o}s--Vaaler theorem \cite[Theorem 2.6]{Har1998}, as well as \cite{HPV2012, BHHV2013} and \cite{Aist2014}. For us, the most relevant partial result is  the Duffin--Schaeffer theorem \cite[Theorem 2.5]{Har1998}.

\begin{thm}[Duffin--Schaeffer theorem] \label{DSthm}
Conjecture \ref{DSconj} holds under the additional hypothesis \eqref{additional}.
\end{thm}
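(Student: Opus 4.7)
The plan is to deduce Theorem \ref{DSthm} from the divergent Borel--Cantelli lemma via a second-moment (Chung--Erd\H{o}s) argument. For each $n \in \bN$, set
$$A_n = \{ \bet \in [0,1) : |n\bet - r| < \Phi(n) \text{ for some } r \in \bZ \text{ with } \gcd(r,n) = 1 \}.$$
Then $A_n$ is a union of $\varphi(n)$ intervals of length $2\Phi(n)/n$, centred at the Farey fractions with denominator $n$, so $|A_n| = \min(1, 2\varphi(n)\Phi(n)/n)$, and the target set in Theorem \ref{DSthm} is precisely $\limsup_n A_n$. The divergence hypothesis \eqref{DShyp} gives $\sum_n |A_n| = \infty$, so by Chung--Erd\H{o}s it suffices to establish the quasi-independence bound
$$\sum_{n, m \le N} |A_n \cap A_m| \ll \left( \sum_{n \le N} |A_n| \right)^2$$
for infinitely many $N$. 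That yields $|\limsup_n A_n| > 0$, whereupon Gallagher's zero-one law promotes the measure to one.

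The heart of the matter is an overlap estimate for $|A_n \cap A_m|$ with $n \neq m$. Two defining intervals $I_n(r) = (r/n - \Phi(n)/n,\, r/n + \Phi(n)/n)$ and $I_m(s)$ intersect precisely when $|rm - sn| < m\Phi(n) + n\Phi(m)$, so bounding $|A_n \cap A_m|$ reduces to counting coprime pairs $(r,s)$ with $\gcd(r,n) = \gcd(s,m) = 1$ subject to this inequality, and weighting by the intersection length. A divisor argument, keyed on $d = \gcd(n,m)$ and the residues of $rm - sn$ modulo $d$, is expected to deliver an estimate of the form
$$|A_n \cap A_m| \ll \frac{\varphi(n)\varphi(m)}{nm} \Phi(n)\Phi(m) \cdot g(n,m),$$
where $g(n,m)$ is a multiplicative correction factor depending on the shared prime factors of $n$ and $m$. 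I anticipate this number-theoretic counting step (essentially the overlap lemma of Duffin and Schaeffer) to be the principal technical obstacle, as it must be sharp enough to absorb the losses produced by pairs $(n,m)$ sharing many small prime factors.

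Summing the overlap bound across $n, m \le N$ controls the double sum by $\bigl(\sum_{n \le N}\Phi(n)\bigr)^2$, and here the additional hypothesis \eqref{additional} closes the loop: it guarantees that, along a subsequence of $N$, this quantity is comparable to $\bigl(\sum_{n \le N} \frac{\varphi(n)}{n}\Phi(n)\bigr)^2 \asymp \bigl(\sum_{n \le N} |A_n|\bigr)^2$. The role of \eqref{additional} is thus to reconcile the two sums that arise naturally in the argument: $\sum \Phi(n)$ from the unrestricted overlap bound, and $\sum \frac{\varphi(n)}{n}\Phi(n)$ from the measures $|A_n|$ themselves. With this reconciliation in hand, Chung--Erd\H{o}s yields positive measure for $\limsup_n A_n$, Gallagher's zero-one law upgrades it to full measure, and Theorem \ref{DSthm} follows.
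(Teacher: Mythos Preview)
The paper does not prove Theorem \ref{DSthm}. It is quoted as a classical result from the literature (specifically \cite[Theorem 2.5]{Har1998}) and then \emph{applied} in \S\ref{DS}; there is no proof in the paper to compare against.

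That said, your outline is a faithful sketch of the standard proof of the Duffin--Schaeffer theorem as found in \cite{DS1941} and \cite{Har1998}: one sets up the sets $A_n$, uses the divergence hypothesis \eqref{DShyp} to get $\sum_n |A_n| = \infty$, proves an overlap estimate for $|A_n \cap A_m|$ via the arithmetic of reduced fractions, invokes the Chung--Erd\H{o}s inequality to obtain positive measure for $\limsup_n A_n$, and then applies Gallagher's zero--one law \cite{Gal1961}. You have also correctly identified the role of \eqref{additional}: the overlap lemma naturally produces a bound in terms of $\sum_{n \le N} \Phi(n)$ rather than $\sum_{n \le N} \frac{\varphi(n)}{n}\Phi(n)$, and \eqref{additional} is precisely what allows one to pass between the two along a subsequence of $N$.

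Two small technical caveats on your write-up. First, the formula $|A_n| = \min(1, 2\varphi(n)\Phi(n)/n)$ is not literally correct, since the $\varphi(n)$ intervals can overlap before they cover $[0,1)$; one typically truncates $\Phi(n)$ to be at most $1/2$ (or $1/(2n)$) at the outset, which is harmless for the divergence hypothesis. Second, your overlap estimate is stated only as an expectation (``is expected to deliver''); this lemma is indeed the crux, and a genuine proof would need to carry it out and show that the correction factor $g(n,m)$ sums acceptably. As a proof \emph{plan} for a result the paper merely cites, however, your proposal is on target.
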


If $\Phi$ were monotonic or supported on primes then the hypothesis \eqref{additional} would pose no difficulties \cite[p. 27]{Har1998}, but in general this hypothesis is quite unwieldy. There have been very few genuinely different examples in which the Duffin--Schaeffer theorem has been applied so, \emph{a priori}, our strategy may come across as being particularly ambitious. One other example of an application of the Duffin--Schaeffer theorem is \cite[Theorem 2.3]{BHV2016}, which we shall mention again in due course.

We tame our auxiliary approximating function $\Phi$ by restricting its support to the complement of a `poorly-behaved' set $B$, giving rise to a modified auxiliary approximating function $\Psi = \Psi_\alp^\gam$ --- see \S\S \ref{fractional} and \ref{DS}. By partial summation, we are led to study the sums
\[
\sum_{\substack{n \le N \\ n \notin B}} \frac1{\| n \alp - \gam\|}, \qquad
\sum_{\substack{n \le N \\ n \notin B}} \frac{\varphi(n)}{n \| n \alp - \gam\|}.
\]
Specifically, we require sharp upper bounds for the first sum and sharp lower bounds for the second. The former result follows rather quickly from the work of Beresnevich, Haynes and Velani. For the latter, we ultimately require sharp lower bounds for the average of the Euler totient function $\varphi$ over inhomogeneous Bohr sets
\[
N_\gam(\alp, \rho) = \{ n \in \bN: n \le N, \| n \alp - \gam \| \le \rho \}.
\]

The point is that one needs to understand the structure of these Bohr sets. Beresnevich, Haynes and Velani investigated this structure using the Ostrowski expansion \cite[\S 3]{BHV2016}, resulting in a `gaps lemma' \cite[Lemma 5.1]{BHV2016}. Thus, admittedly, those authors had already provided fairly precise information concerning the structure of $N_\gam(\alp, \rho)$. Nonetheless, it is not at all clear whether this knowledge suffices for a rigorous averaging of the totient function over a Bohr set.

As it were, Bohr sets have been studied in other areas of mathematics, and have been particularly useful in additive combinatorics \cite[\S 4.4]{TV2006}. In a blog post \cite{TaoPost}, Tao explains a correspondence between Bohr sets and generalised arithmetic progressions, and even uses it to provide a possible strategy for proving the Littlewood conjecture. The idea is that there should be generalised arithmetic progressions $P$ and $P'$, of comparable size, for which
\[
P \subseteq N_\gam(\alp, \rho) \subseteq P'.
\]

For us, it is the first inclusion that is important, since it is lower bounds for averages of $\varphi$ that we seek. For some $b, A_1,A_2, N_1, N_2 \in \bN$, we will have a two-dimensional arithmetic progression
\[
P = \{ b+ A_1 n_1 + A_2 n_2 : 1 \le n_1 \le N_1, 1\le n_2 \le N_2 \}.
\]
Tao's construction is for the homogeneous case $\gam = 0$, and we succeed in extending it to the inhomogeneous case in certain ranges, using the assumption that $\alp$ is not Liouville. The idea is to separate the task into homogeneous and inhomogeneous parts, handling the inhomogeneous part using the three distance theorem \cite{MK1998}.

At this stage it should be fairly intuitive that we have enough structure to prove that the totient function averages well: we need to show that $\varphi(n)/n \gg 1$ on average over a generalised arithmetic progression. We use the AM--GM inequality \cite[Ch. 2]{Ste2004} to go from considering an arithmetic mean to considering a geometric mean. This enables us to exploit the fact that $\varphi(n)/n$ is a multiplicative arithmetic function, and to thus separate the contribution from each prime. We are thereby able to reduce the problem to counting lattice points, and to finish the proof.

We briefly address an important technical finesse. The sum
\[
R_N(\alp, \gam) := \sum_{n \le N} \frac1{\| n \alp - \gam\|}
\]
was deeply investigated in \cite{BHV2016}. A recurring theme in that analysis was that a small number of terms, perhaps a single term, could greatly affect the sum. By taming our auxiliary approximating function, as described above, we are able to obtain much better control over the relevant sums. Indeed, in \cite{BHV2016} we see that $R_N(\alp, \gam)$ can behave erratically. Consequently, those authors are only able to solve the problem unconditionally for almost all $\alp$ --- see \cite[Theorem 2.3]{BHV2016}.

We end this discussion with a challenge related to the previous paragraph. In the course of our proof, we establish the Duffin--Schaeffer conjecture for a class of functions, namely those modified auxiliary approximating functions of the shape $\Psi = \Psi_\alp^\gam$, where $\alp$ is irrational and not Liouville. The task of proving the Duffin--Schaeffer conjecture for $\Phi = \Phi_\alp^\gam$, however, remains largely open.

\subsection{Organisation}

In \S \ref{Bohr}, we provide a large two-dimensional arithmetic progression within $N_\gam(\alp, \rho)$. For homogeneous Bohr sets, the construction was given on Tao's blog \cite{TaoPost}. We provide a much simplified account in a significant range in the non-Liouville case, and also extend to the inhomogeneous setting. Then, in \S \ref{average}, we use the generalised arithmetic progression structure together with the geometry of numbers, to show that $\varphi(n)/n \gg 1$ on average over the Bohr set. Our estimate enables us to study the pertinent sums involving $\| n \alp - \gam\|^{-1}$ in \S \ref{fractional}, and to then confirm the hypotheses of the Duffin--Schaeffer theorem in \S \ref{DS}, thereby establishing Theorem \ref{main}. 

\subsection{Notation}

We use Landau and Vinogradov notation: for functions $f$ and positive-valued functions $g$, we write $f \ll g$ or $f = O(g)$ if there exists a constant $C$ such that $|f(x)| \le C g(x)$ for all $x$. Further, we write $f \asymp g$ if $f \ll g \ll f$. If $S$ is a set, we denote the cardinality of $S$ by $|S|$ or $\# S$. The symbol $p$ is reserved for primes. The pronumeral $N$ denotes a positive integer, sufficiently large in terms of constants. When $x \in \bR$, we write $\| x \|$ for the distance from $x$ to the nearest integer.

\subsection{Acknowledgments}

The author was supported by EPSRC Programme Grant EP/J018260/1. The author was supported by the National Science Foundation under Grant No. DMS-1440140 while in residence at the Mathematical Sciences Research Institute in Berkeley, California, during the Spring 2017 semester. Thanks to Victor Beresnevich, Andrew Pollington, Carl Pomerance, Terence Tao and Sanju Velani for valuable suggestions.

\section[Bohr sets]{Bohr sets and generalised arithmetic progressions}
\label{Bohr}

In this section, we study the inhomogeneous one-dimensional Bohr sets $N_\gam(\alp,\rho)$, showing that they contain large two-dimensional arithmetic progressions. The reader may consult \cite{TaoPost} for a more general discussion of the homogeneous case, or \cite[\S 4.4]{TV2006} for a similar theory that is more classical. Our definitions will differ slightly from the usual ones, even upon specialisation to the homogeneous scenario.

We fix $\alp, \gam \in \bR$ as given from the outset, with $\alp$ irrational and not Liouville. For $b, A_1, A_2, N_1, N_2 \in \bN$, define
\[
P(b; A_1, A_2; N_1, N_2) = \{ b + A_1 n_1 + A_2 n_2: 1 \le n_1 \le N_1, 1 \le n_2 \le N_2\} \subseteq \bN;
\]
this is a variant of a two-dimensional generalised arithmetic progression. One can read \cite[Ch. 8]{Nat1996} or \cite{TV2006} to learn more about generalised arithmetic progressions. 

Recall that the \emph{diophantine exponent} of $\alpha$ is
\[
\mu(\alp) := \sup \{w > 0: \| n \alpha \| < n^{-w} \text{ for infinitely many } n \in \bN \}.
\]
Let $\lam = 1 + \mu(\alp)$; our assumptions imply that
\[
2 \le \lam < \infty.
\]
Throughout, put
\begin{equation} \label{epsilon}
\eps = \frac1{10 \lambda}.
\end{equation}

\begin{lemma} \label{L1}
Let $N^{-2 \eps} \le \rho \le N^{- \eps}$. Then there exist $b, A_1, A_2, N_1, N_2 \in \bN$ with
\[
(A_1, A_2) = 1, \quad N_1 N_2 \asymp \rho N, \quad A_2 > N_1, \quad \min(N_1, N_2) \ge N^\eps
\]
such that $N/20 \le b \le N/2$ and
\[
P(b; A_1, A_2; N_1, N_2) \subseteq N_\gam(\alp, \rho).
\]
\end{lemma}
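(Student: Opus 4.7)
The plan is to split the construction into a homogeneous skeleton and an inhomogeneous translate. First I build a two-dimensional arithmetic progression $P(0; A_1, A_2; N_1, N_2) \subseteq N_0(\alp, \rho/2) \cap [1, N/2]$ via the geometry of numbers, then translate by a base point $b \in [N/20, N/2]$ with $\|b\alp - \gam\| \le \rho/2$ produced by the three distance theorem; the triangle inequality then yields $P(b; A_1, A_2; N_1, N_2) \subseteq N_\gam(\alp, \rho)$.

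For the homogeneous skeleton I consider the unimodular lattice $\Lam = \bZ \cdot (1, \alp)^T + \bZ \cdot (0, 1)^T \subset \bR^2$ and the symmetric convex body $K = [-N, N] \times [-\rho, \rho]$. Minkowski's second theorem gives successive minima $\lam_1 \le \lam_2$ with $\lam_1 \lam_2 \asymp (\rho N)^{-1}$, and in two dimensions one can choose a $\bZ$-basis $v_i = (A_i, \delta_i)$ of $\Lam$ with $v_i \in \lam_i K$; negating if necessary, $A_1, A_2 \ge 1$. Setting $N_i := \lfloor c \min(N/A_i, \rho/|\delta_i|) \rfloor$ with a small absolute constant $c > 0$, one gets $N_i \asymp \lam_i^{-1}$ and $N_1 N_2 \asymp \rho N$. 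For $1 \le n_j \le N_j$ the lattice point $n_1 v_1 + n_2 v_2$ then satisfies $n_1 A_1 + n_2 A_2 \le N/2$ and $\|(n_1 A_1 + n_2 A_2) \alp\| \le \rho/2$. Coprimality $(A_1, A_2) = 1$ is automatic: since $v_1, v_2$ is a $\bZ$-basis of $\Lam$ and $(1, \alp) \in \Lam$, we have $1 \in A_1 \bZ + A_2 \bZ$.

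For the size constraints, the Dirichlet bound $\lam_1 \ll (\rho N)^{-1/2}$ yields $N_1 \gg \sqrt{\rho N} \gg N^\eps$. The non-Liouville estimate $\|n\alp\| \ge n^{-\mu(\alp) - \eta}$ (for any $\eta > 0$ and $n$ sufficiently large) bounds $\lam_1$ from below: optimising $\max(n/N, \|n\alp\|/\rho)$ at the balance point $n \asymp (N/\rho)^{1/(\lam + \eta)}$ gives $\lam_1 \gg N^{1/\lam - 1 - O(\eps)}$, hence $N_2 \asymp \lam_1 \rho N \gg N^{1/\lam - O(\eps)} \gg N^\eps$, since $\eps = 1/(10\lam)$. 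For the relation $A_2 > N_1$, the determinant identity $|A_1 \delta_2 - A_2 \delta_1| = 1$ and the triangle inequality force either $A_2 |\delta_1| \ge 1/2$ or $A_1 |\delta_2| \ge 1/2$; in the first case $A_2 \ge 1/(2|\delta_1|) \ge N_1/\rho \gg N_1 \cdot N^\eps$, and in the second I swap the labels of $v_1$ and $v_2$, which preserves coprimality and all other bounds.

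Finally, I locate the inhomogeneous base point via the three distance theorem applied to $\{k\alp \bmod 1 : 1 \le k \le 9N/20\}$: the maximal gap is bounded by $1/q_j$, where $q_j$ is the largest convergent denominator of $\alp$ with $q_j \le 9N/20$, and the non-Liouville bound $q_{j+1} \le q_j^{\mu(\alp) + o(1)}$ gives $q_j \ge N^{1/(\lam - 1) - o(1)}$, so the maximal gap is at most $N^{-1/(\lam-1) + o(1)} \le \rho$. Translating by $N/20$ then produces the required $b$. The main obstacle is the size verification: enforcing $\min(N_1, N_2) \ge N^\eps$ and $A_2 > N_1$ simultaneously with $N_1 N_2 \asymp \rho N$ relies crucially on the non-Liouville lower bound on $\lam_1$, and this is what dictates both the permitted range $N^{-2\eps} \le \rho \le N^{-\eps}$ and the choice $\eps = 1/(10\lam)$.
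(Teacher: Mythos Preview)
Your proof is correct and shares the paper's overall architecture --- a homogeneous skeleton plus an inhomogeneous shift found via the three distance theorem --- but the construction of the skeleton is genuinely different. The paper works explicitly with continued fraction denominators: it picks $t$ with $\rho q_{t-1}^2 \le N < \rho q_t^2$ and sets $(A_1,A_2)$ equal to $(q_{t-2},q_{t-1})$ or $(q_{t-1},q_t)$ according to whether $N < \rho q_{t-1}q_t$, then takes $N_1 = \lfloor \rho A_2/10\rfloor$, $N_2 = \lfloor N/(10A_2)\rfloor$. This makes several verifications immediate: coprimality of consecutive convergents is classical, $A_2 > N_1$ is automatic since $N_1 \le \rho A_2/10 < A_2$, and the bounds on $\|A_i\alp\|$ come straight from $\|q_\ell\alp\| < 1/q_{\ell+1}$. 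Your geometry-of-numbers route replaces this case analysis by Minkowski's second theorem and the existence of a basis of $\Lam$ realising the successive minima (valid in dimension two); coprimality then falls out of the basis property, the size bounds $\min(N_1,N_2)\ge N^\eps$ come from upper and lower estimates on $\lam_1$, and $A_2>N_1$ is obtained by your determinant/swap argument. The trade-off is that the paper's approach is more elementary and self-contained, while yours is more conceptual and would adapt more readily to higher-rank Bohr sets. Two small points to tidy: the ``translating by $N/20$'' step deserves one explicit line (apply the three distance theorem to the shifted target $\gam - \lfloor N/20\rfloor\alp$ and set $b = j_0 + \lfloor N/20\rfloor$), and you should arrange the gap bound to be at most $\rho/2$ rather than $\rho$ so that the final triangle inequality closes.
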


\begin{proof} 

Our construction relies heavily on the theory of continued fractions \cite{RS1992}. We begin by choosing $b_0 \in \bN$ such that 
\begin{equation} \label{base}
\| b_0 \alp - \gam \| \le \rho/10, \qquad b_0 \le N/10,
\end{equation}
as we now explain. We shall apply the three distance theorem \cite{MK1998} to 
\[
M := \lfloor N/10 \rfloor.
\]
This concerns the distances $d_{i+1} - d_i$, where
\[
\{ d_1, \ldots, d_M \} = \{ n \alp - \lfloor n \alp \rfloor: 1 \le n \le M \}
\]
and
\[
0 = d_0 < d_1 < \ldots < d_M < d_{M+1} = 1.
\]
For some $i \in \{0,1,\ldots, M\}$, the fractional part of $\gam$ lies in the subinterval $[d_i, d_{i+1}]$. Moreover, there exists $b_0 \in \{ 1,2,\ldots, M \}$ for which the fractional part of $b_0 \alp$ is either $d_i$ or $d_{i+1}$. Now
\[
\| b_0 \alp - \gam \| \le d_{i+1} - d_i \le d_\alp(M) := \max_{0 \le i \le M} (d_{i+1} - d_i).
\]

The three distance theorem tells us that the distances $d_{i+1} - d_i$ take on at most three distinct values, and also specifies what the three possible distances are. Let $q_1, q_2, \ldots$ be the successive denominators of the continued fraction expansion of $\alp$, and let $a_0, a_1, a_2, \ldots$ be the partial quotients. By \cite[Theorem 1]{MK1998}, there is a unique representation
\[
M = r q_k + q_{k-1} + s,
\]
where $k \in \bZ_{\ge 0}$, $1 \le r \le a_{k+1}$ and $0 \le s < q_k$. Moreover, \cite[Corollary 1]{MK1998} implies the upper bound
\[
d_\alp(M) \ll a_{k+1} \| q_k \alp \| \ll \frac{a_{k+1}}{q_{k+1}} \ll \frac1{q_k}.
\]
By \cite[Lemma 1.1]{BHV2016}, we have 
\begin{equation} \label{PowerBound}
q_{\ell+1} \ll q_\ell^\lam \qquad (\ell \in \bN),
\end{equation}
and so
\[
\| b_0 \alp - \gam \| \le d_\alp(M) \ll q_{k+1}^{-1/\lam} \ll M^{-1/\lam} \ll N^{-3\eps}.
\]
Hence $\| b_0 \alp - \gam \|  \le  \rho/10$, and we have verified \eqref{base}.

Next we tackle the homogeneous part, similarly to Tao \cite{TaoPost}. Let $t \in \bN$ be such that
\[
\rho q_{t-1}^2 \le N < \rho q_t^2,
\]
and note that
\begin{equation} \label{qtbound}
q_t > (N/\rho)^{1/2} > N^{1/2}.
\end{equation}
We use two cases, as Tao does: let
\[
(A_1, A_2) = \begin{cases}
(q_{t-2}, q_{t-1}), &\text{if } N < \rho q_{t-1} q_t \\
(q_{t-1}, q_t), &\text{if } N \ge \rho q_{t-1} q_t.
\end{cases}
\]
By a basic property of continued fractions, we have $(A_1, A_2) = 1$. Choose
\[
N_1 = \Bigl \lfloor \frac{\rho A_2}{10} \Bigr \rfloor, \quad N_2 = \Bigl \lfloor \frac N { 10 A_2} \Bigr \rfloor.
\]
Plainly $A_2 > N_1$.

If $N < \rho q_{t-1} q_t$ then, by \eqref{epsilon}, \eqref{PowerBound} and \eqref{qtbound}, we have
\[
\rho A_2 = \rho q_{t-1} \gg \rho q_t^{1/\lam} \gg \rho N^{\frac1{2\lam}} \gg N^{2 \eps}
\]
and
\[
\frac N{A_2} = \frac N{q_{t-1}} \ge \rho q_{t-1} \gg N^{2 \eps}.
\]
In particular, since $N$ is large we now have $\min(N_1, N_2) \ge N^\eps$ and $N_1 N_2 \asymp \rho N$. If, on the other hand, we have $N \ge \rho q_{t-1} q_t$, then
\[
\rho A_2 = \rho q_t \gg N^{2 \eps}
\]
and
\[
\frac N {A_2} = \frac N{q_t} \ge \rho q_{t-1} \gg N^{2 \eps}.
\]
Again we have $\min(N_1, N_2) \ge N^\eps$ and $N_1 N_2 \asymp \rho N$.

Finally, we shift our base point by defining $b = b_0 + N_2 A_2$. This lies in the correct range and, further, if $n \in P(b; A_1, A_2; N_1, N_2)$ then
\[
n \le b + N_1 A_1 + N_2 A_2 \le N/2 + \rho A_1 A_2 / 10 + N/10 \le N
\]
and
\[
\| n \alp - \gam \| \le \| b_0 \alp - \gam \| + N_1 \| A_1 \alp \| + 2N_2 \| A_2 \alp \|.
\]
If $N < \rho q_{t-1} q_t$ then 
\[
\| A_1 \alp \| \le q_{t-1}^{-1}, \qquad \| A_2 \alp \| \le q_t^{-1},
\]
so
\[
\| n \alp - \gam \| \le \frac \rho{10} + \frac \rho {10} + \frac N{5 q_{t-1} q_t} \le \rho.
\]
If instead $N \ge \rho q_{t-1} q_t$, then
\[
\| A_1 \alp \| \le q_t^{-1}, \qquad \| A_2 \alp \| \le q_{t+1}^{-1} \le q_t^{-1},
\]
so
\[
\| n \alp - \gam \| \le \frac \rho{10} + \frac \rho{10} + \frac N{5 q_t^2} \le \rho.
\]
\end{proof}

The range $N^{-2 \eps} \le \rho \le N^{- \eps}$ will suffice for our purposes. The proof could be modified so as to unify the two cases, but one aspect would become slightly more tedious, so we feel that the benefit would be marginal at best.

\section{The geometry of numbers}
\label{average}

In this section, we use the generalised arithmetic progression structure to control the average behaviour of the Euler totient function $\varphi$ on 
\[
N^*_\gam(\alp, \rho) := N_\gam(\alp, \rho) \cap [N/20, N].
\]
The AM--GM inequality \cite[Ch. 2]{Ste2004} will enable us to treat each prime separately, at which point we can employ the geometry of numbers.

\begin{lemma}
Let $N^{- 2 \eps} \le \rho \le N^{- \eps}$. Then
\[
\sum_{n \in N^*_\gam(\alp, \rho)} \frac{\varphi(n)}n \gg \rho N.
\]
\end{lemma}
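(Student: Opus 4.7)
The plan is to apply Lemma \ref{L1} to produce a two-dimensional arithmetic progression
\[
P = P(b; A_1, A_2; N_1, N_2) \subseteq N_\gam(\alp, \rho)
\]
with $(A_1, A_2) = 1$, $A_2 > N_1$, $|P| = N_1 N_2 \asymp \rho N$, and $\min(N_1, N_2) \ge N^\eps$. Since $b \ge N/20$, every element of $P$ lies in $[N/20, N]$, whence $P \subseteq N^*_\gam(\alp, \rho)$. The condition $A_2 > N_1$ makes the parametrisation $(n_1, n_2) \mapsto b + A_1 n_1 + A_2 n_2$ injective, so $|P| = N_1 N_2$. It then suffices to prove $\sum_{n \in P} \varphi(n)/n \gg |P|$.

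Following the strategy outlined in the introduction, I would pass from the arithmetic to the geometric mean via AM--GM,
\[
\frac{1}{|P|} \sum_{n \in P} \frac{\varphi(n)}{n} \ge \Bigl( \prod_{n \in P} \frac{\varphi(n)}{n} \Bigr)^{1/|P|},
\]
and then factor the right-hand side using the multiplicativity of $\varphi(n)/n = \prod_{p \mid n}(1 - 1/p)$:
\[
\prod_{n \in P} \frac{\varphi(n)}{n} = \prod_p \Bigl(1 - \frac{1}{p}\Bigr)^{|P_p|}, \qquad P_p := \{ n \in P : p \mid n \}.
\]
The task thus reduces to the bound $\sum_p (|P_p|/|P|) \bigl(- \log(1 - 1/p)\bigr) = O(1)$.

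For this, I would count $|P_p|$ directly on the 2D progression. Since $(A_1, A_2) = 1$, at least one of $A_1, A_2$ is coprime to $p$; fixing the other variable and solving the resulting congruence modulo $p$ in the remaining variable yields
\[
|P_p| = \frac{|P|}{p} + O(\max(N_1, N_2)),
\]
so, uniformly in $p$,
\[
\frac{|P_p|}{|P|} \le \frac{1}{p} + O\bigl(1/\min(N_1, N_2)\bigr) \le \frac{1}{p} + O(N^{-\eps}).
\]
Since $P \subseteq [1, N]$ forces $P_p = \emptyset$ for $p > N$, the sum splits as
\[
\sum_{p \le N} \frac{|P_p|}{|P|}\bigl(-\log(1 - 1/p)\bigr) \ll \sum_p \frac{1}{p^2} + N^{-\eps} \sum_{p \le N} \frac{1}{p} \ll 1 + N^{-\eps} \log\log N \ll 1,
\]
using $-\log(1-1/p) \asymp 1/p$. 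Hence the geometric, and therefore the arithmetic, mean of $\varphi(n)/n$ over $P$ is $\gg 1$, finishing the proof.

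The technical heart of the argument is the uniform error bound in the lattice-point count: without the factor $N^{-\eps}$, the divergence of $\sum_{p \le N} 1/p$ would overwhelm the estimate. The hypothesis $\min(N_1, N_2) \ge N^\eps$ supplied by Lemma \ref{L1} is precisely tailored to save the day, and it is for the sake of this bound that the two-dimensional Bohr set construction --- relying on $\alp$ being non-Liouville --- is carried out.
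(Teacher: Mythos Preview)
Your argument is correct and follows the same overall route as the paper: restrict to the proper GAP $P$ from Lemma~\ref{L1}, apply AM--GM, factor $\varphi(n)/n$ over primes, and bound the density of $p$-multiples in $P$. The only noteworthy differences are in the bookkeeping. For the count of $|P_p|$ you use the elementary congruence argument (fix one coordinate and solve in the other), whereas the paper treats the case $p\nmid A_1A_2$ by viewing the congruence as a rank-two sublattice and invoking a lattice-point lemma of Schmidt; both yield the same bound $|P_p|\ll N_1N_2/p+\max(N_1,N_2)$, and your route avoids the external citation. In the final summation the paper first coarsens to $\alpha_p\ll p^{-\eps}$ (using $p\le N$ and $\min(N_1,N_2)\ge N^\eps\ge p^\eps$) and then sums $\sum_p p^{-1-\eps}<\infty$, while you keep the two-term bound $1/p+O(N^{-\eps})$ and handle the error via $N^{-\eps}\sum_{p\le N}1/p\ll N^{-\eps}\log\log N$; either estimate closes the argument. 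One small wording point: injectivity of the parametrisation uses both $A_2>N_1$ \emph{and} $(A_1,A_2)=1$, as you listed, not $A_2>N_1$ alone.
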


\begin{proof}
Let $b,A_1,A_2,N_1$ and $N_2$ be as in Lemma \ref{L1}, and let
\[
P = P(b; A_1, A_2; N_1, N_2) \subseteq N_\gam(\alp, \rho).
\]
The lower bound $b \ge N/20$ now ensures that $P \subseteq N^*_\gam(\alp, \rho)$. We will show, \emph{a fortiori}, that
\[
\sum_{n \in P} \frac{\varphi(n)}n \gg \rho N.
\]
Note that the generalised arithmetic progression $P$ is \emph{proper}, in that if $n \in P$ then there are unique $n_1, n_2 \in \bN$ for which
\[
n_1 \le N_1, \quad n_2 \le N_2, \quad n = b + n_1 A_1 + n_2 A_2.
\]
This property follows from the fact that $(A_1,A_2) = 1$, together with the fact that $A_2 > N_1$. Now
\[
|P| = N_1 N_2 \asymp \rho N
\]
so, by the AM--GM inequality, it suffices to prove that
\[
X := \Bigl( \prod_{n \in P} \frac{\varphi(n)}n \Bigr) ^{|P|^{-1}} \gg 1.
\]

Next we use the identity 
\[
\frac{\varphi(n)}n = \prod_{p \mid n} (1-1/p),
\]
and swap the order of multiplication. This gives
\[
X = \prod_p (1- 1/p)^{\alp_p},
\]
where
\[
\alp_p = |P|^{-1} \# \{ n \in P: n \equiv 0 \mmod p \}
\]
should be regarded as a proportion. 

This leads us to bound the quantities $\alp_p$. To this end, we may suppose that $\alp_p > 0$, whereupon $p \le N$. Fix positive integers $n_1^* \le N_1$ and $n_2^* \le N_2$ such that
\[
b+ A_1 n_1^* + A_2 n_2^* \equiv 0 \mmod p.
\]
The congruence
\[
b + A_1 n_1 + A_2 n_2 \equiv 0 \mmod p
\]
then implies that
\begin{equation} \label{cong}
A_1 n'_1 +  A_2 n'_2 \equiv 0 \mmod p,
\end{equation}
where $n'_1 = n_1 - n_1^*$ and $n'_2 = n_2 - n_2^*$ are integers in the box
\[
\cB := [-N_1, N_1] \times [-N_2, N_2] \subseteq \bR^2.
\]
In particular, the quantity $|P| \alp_p$ is bounded above by the number of integer solutions to $\eqref{cong}$ in the box $\cB$.

Recall that $(A_1, A_2) = 1$. If $p \mid A_1$ or $p \mid A_2$ then 
\[
\alp_p \ll \frac{ N_1(1+N_2/p) + N_2(1+N_1/p)}{N_1N_2} \ll p^{-1} + N_1^{-1} + N_2^{-1}.
\]
As $\min(N_1,N_2) \ge N^\eps \ge p^\eps$, we get $\alp_p \ll p^{-\eps}$ in this case.

Now assume that $A_1$ and $A_2$ are not divisible by $p$. Then \eqref{cong} defines a full lattice of determinant $p$. We apply the following special case of a counting lemma \cite[Lemma 2]{Sch1995}.

\begin{lemma} \label{SchmidtLemma}
Let $\cS \subseteq \bR$ be a convex set containing the origin, and suppose that $\cS$ lies in the compact disc of radius $r$ centred at the origin. Let $V(\cS)$ denote the volume of $\cS$, and let $\Lam \subseteq \bZ^2$ be a full lattice in $\bR^2$. Then
\[
|\cS \cap \Lam| = 1 + V(S) / \det(\Lam) + O(r).
\]
\end{lemma}

By Lemma \ref{SchmidtLemma}, we have
\[
|P| \alp_p \ll \frac{N_1N_2}p + N_1 + N_2
\]
and, as $|P| = N_1N_2$, we again get
\[
\alp_p \ll p^{-1} + N_1^{-1} + N_2^{-1} \ll p^{-\eps}.
\]

Setting
\[
Y = \log(1/X),
\]
we find that
\[
Y \le \sum_p \alp_p \log(1+2/p) \ll \sum_p p^{-\eps} \log(1+2/p).
\]
As $\log(1+2/p) \le 2/p$, we now have
\[
Y \ll \sum_p p^{-1-\eps} \ll 1,
\]
so $X \gg 1$. As discussed, this completes the proof.
\end{proof}

We also record the corresponding upper bound, for later use. This will be a fairly simple consequence of \cite[Lemmas 6.1 and 6.3]{BHV2016}, which we state below for convenience. 

\begin{lemma} \label{L61}
Let $\alp \in \bR \setminus \bQ$, and let $q_\ell$ $(\ell = 0,1,2,\ldots)$ be the successive denominators of the continued fraction expansion of $\alp$. Let $N \in \bN$, and let $\xi > 0$ be such that $0 < 2 \xi < \| q_2 \alp \|$. Suppose
\[ 
\frac1{2\xi} \le q_\ell \le N
\]
for some $\ell$. Then
\[
\lfloor \xi N \rfloor \le \# N_0(\alp, \xi) \le 32 \xi N.
\]
\end{lemma}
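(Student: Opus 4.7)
The plan is to establish both inequalities using the three distance theorem applied to the sequence $\{n\alp\}_{n=1}^{N}$, together with the best-approximation properties of the continued fraction denominators $q_\ell$. The hypothesis $1/(2\xi) \le q_\ell \le N$ delivers $\xi q_\ell \ge 1/2$, which is exactly what is needed to partition $\{1, \ldots, N\}$ into roughly $N/q_\ell$ blocks of length $q_\ell$ and carry out an equidistribution count on each block.

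For the upper bound, I would split $\{1, \ldots, N\}$ into blocks $I_k = \{kq_\ell + 1, \ldots, (k+1)q_\ell\}$ for $k = 0, 1, \ldots, \lfloor N/q_\ell\rfloor$. The set $\{n\alp \bmod 1 : n \in I_k\}$ is a translate of $\cF := \{j\alp \bmod 1 : 1 \le j \le q_\ell\}$ by $kq_\ell \alp$, with a small drift of magnitude $k\|q_\ell\alp\| \le N/q_{\ell+1}$ that is negligible. By the three distance theorem (or equivalently the best-approximation property), consecutive elements of $\cF$ are separated by at least $\|q_{\ell-1}\alp\| \gg 1/q_\ell$, so the arc $[-\xi, \xi] \bmod 1$ contains at most $O(\xi q_\ell)$ elements of $\cF$. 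Multiplying by the $O(N/q_\ell)$ blocks yields $O(\xi N)$, and careful numerical bookkeeping delivers the explicit constant $32$.

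For the lower bound, I would use the complementary equidistribution fact: within $\cF$, the number of elements in any arc of length $2\xi$ is $2\xi q_\ell + O(1)$, which since $2\xi q_\ell \ge 1$ is at least $\lfloor 2\xi q_\ell \rfloor$ up to a bounded error. Summing over $\lfloor N/q_\ell\rfloor$ translates of $\cF$ across the blocks, and handling both the drift across blocks and boundary effects using the condition $2\xi < \|q_2\alp\|$ (which ensures $\ell$ is large enough that the three distance theorem gives a non-degenerate two-gap structure), produces at least $\lfloor \xi N\rfloor$ admissible integers $n \le N$ with $\|n\alp\| \le \xi$.

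The main obstacle is not conceptual but quantitative: one must control the boundary effects at the ends of the blocks and the nonzero drift $k\|q_\ell\alp\|$ accumulated from block to block, all while keeping constants sharp enough to reach exactly $32\xi N$ above and $\lfloor \xi N\rfloor$ below, rather than mere asymptotic bounds $O(\xi N)$ and $\Omega(\xi N)$. Apart from this bookkeeping, the argument is a standard equidistribution exercise in the spirit of \cite{MK1998}, and it explains why the technical hypothesis $2\xi < \|q_2\alp\|$ (forcing $\ell \ge 3$) appears in the statement.
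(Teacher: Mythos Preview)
The paper does not prove this lemma: it is quoted verbatim from \cite[Lemma 6.1]{BHV2016} and stated ``for convenience'', so there is no proof here to compare your proposal against. For the application in \S\ref{average} only the upper bound is used, and only up to implied constants (see \eqref{UpperCount}).

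That said, your sketch is a sound route to the result, with one simplification worth noting: the ``drift'' you worry about is not really an obstacle. For each block $I_k$, the count of $n \in I_k$ with $\|n\alp\| \le \xi$ equals the number of $j \in \{1,\ldots,q_\ell\}$ with $j\alp \bmod 1$ lying in a \emph{translated} arc of length $2\xi$. Since the discrepancy of $\{j\alp\}_{j=1}^{q_\ell}$ is $O(1)$ uniformly over all arcs (the defining property of continued-fraction denominators), the count is $2\xi q_\ell + O(1)$ regardless of which translate one considers, and no error accumulates from block to block. Summing over $\lfloor N/q_\ell\rfloor + O(1)$ blocks then delivers both inequalities with absolute constants. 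Your reading of the hypothesis $2\xi < \|q_2\alp\|$ is essentially right --- it forces $\ell$ to be large enough that the block structure is nondegenerate --- though pinning down the specific constants $\lfloor \xi N\rfloor$ and $32$ does require the careful bookkeeping you anticipate. For the precise argument, consult \cite{BHV2016} directly.
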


The following is the first part of \cite[Lemma 6.3]{BHV2016}.

\begin{lemma} \label{L63} For $\alp \in \bR$, $\rho > 0$ and $N \in \bN$, we have
\[
\# N_\gam(\alp, \rho) \le 1 + \#N_0(\alp, 2 \rho).
\]
\end{lemma}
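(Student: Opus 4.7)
The statement compares an inhomogeneous Bohr set to a homogeneous one, differing only by an additive constant $1$, which strongly suggests an injection argument built on a single ``pivot'' element. The plan is to fix a distinguished element $n_0$ of $N_\gam(\alp,\rho)$ and show that subtracting $n_0$ maps the remaining elements injectively into $N_0(\alp,2\rho)$.

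\textbf{Step 1: dispose of the trivial case.} If $N_\gam(\alp,\rho)$ is empty, the claimed inequality holds vacuously, so we may assume it is nonempty and let $n_0$ be, say, its smallest element; any fixed choice works, but smallest is convenient.

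\textbf{Step 2: the injection.} For any other element $n \in N_\gam(\alp,\rho)$, set $m = n - n_0$. Then $m$ is a positive integer with $m \le N - 1 < N$, so $m \in \{1,\ldots,N\}$. By the triangle inequality for $\|\cdot\|$,
\[
\|m\alp\| = \|(n-n_0)\alp\| \le \|n\alp - \gam\| + \|\gam - n_0\alp\| \le \rho + \rho = 2\rho,
\]
so $m \in N_0(\alp,2\rho)$. The map $n \mapsto n - n_0$ is manifestly injective on $N_\gam(\alp,\rho) \setminus \{n_0\}$.

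\textbf{Step 3: conclude.} Therefore
\[
\#N_\gam(\alp,\rho) - 1 = \#\bigl(N_\gam(\alp,\rho) \setminus \{n_0\}\bigr) \le \#N_0(\alp,2\rho),
\]
which rearranges to the claimed bound. There is no real obstacle here; the only subtle point is observing that the triangle inequality for the distance-to-nearest-integer function $\|\cdot\|$ permits the combination of two inhomogeneous constraints into a single homogeneous one at the cost of a factor of $2$ in $\rho$, and that the shift $n \mapsto n - n_0$ respects the range $\{1,\ldots,N\}$ once we choose $n_0$ to be the smallest element.
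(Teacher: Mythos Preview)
Your proof is correct. The paper does not actually prove this lemma; it merely quotes it as the first part of \cite[Lemma 6.3]{BHV2016} and uses it as a black box. Your pivot-and-subtract argument is the standard one and is presumably what appears in the cited reference, so there is nothing to contrast.
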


Let $\eta$ be a positive constant, small in terms of $\alp$. When
\begin{equation} \label{GoodRange}
N^{-4 \eps} \le \rho \le \eta,
\end{equation}
we see from \eqref{epsilon} and \eqref{PowerBound} that some continued fraction denominator $q_\ell$ must lie in the range $[(4 \rho)^{-1}, N]$. We may therefore apply Lemmas \ref{L61} and \ref{L63} to get
\begin{equation} \label{UpperCount}
\# N_\gam(\alp, \rho) \ll \rho N.
\end{equation}
Now a trivial estimate yields
\[
\sum_{n \in N^*_\gam(\alp, \rho)} \frac{\varphi(n)}n \le \sum_{n \in N_\gam(\alp, \rho)} \frac{\varphi(n)}n \ll \rho N.
\]
We summarise the primary outcome of this section as follows.

\begin{cor} \label{RightSize}
Let $N^{-2 \eps} \le \rho \le N^{- \eps}$. Then
\[
\sum_{n \in N^*_\gam(\alp, \rho)} \frac{\varphi(n)}n \asymp \rho N.
\]
The implicit constants depend at most on $\alp$.
\end{cor}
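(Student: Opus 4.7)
The plan is to combine the two matching bounds: the lower bound is exactly the preceding Lemma in Section~\ref{average}, so only the upper bound $\sum_{n \in N^*_\gam(\alp,\rho)} \varphi(n)/n \ll \rho N$ needs attention. Since $\varphi(n)/n \le 1$, it suffices to show the cardinality bound $\#N^*_\gam(\alp,\rho) \ll \rho N$, and for this one can simply invoke the counting result $\#N_\gam(\alp,\rho) \ll \rho N$ already available from the work of Beresnevich--Haynes--Velani.

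More precisely, I would verify that the hypothesis of Lemma~\ref{L61} is met under $N^{-2\eps} \le \rho \le N^{-\eps}$. The condition $2\xi < \|q_2 \alp\|$ (with $\xi = 2\rho$) holds for large $N$ since $\rho \le N^{-\eps}$ is small. For the existence of a continued-fraction denominator $q_\ell$ lying in $[(4\rho)^{-1}, N]$, take $\ell$ minimal with $q_\ell \ge (4\rho)^{-1}$; then $q_{\ell-1} < (4\rho)^{-1}$, so by \eqref{PowerBound}
\[
q_\ell \ll q_{\ell-1}^\lam \ll \rho^{-\lam} \le N^{2\eps\lam} = N^{1/5} \le N,
\]
using $\eps = 1/(10\lam)$ from \eqref{epsilon}. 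Thus Lemma~\ref{L61} applies and gives $\#N_0(\alp, 2\rho) \le 64\rho N$; then Lemma~\ref{L63} yields $\#N_\gam(\alp,\rho) \le 1 + 64\rho N \ll \rho N$, where we absorb the $+1$ using $\rho N \ge N^{1-2\eps} \to \infty$.

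The trivial inequality $\varphi(n)/n \le 1$ then gives
\[
\sum_{n \in N^*_\gam(\alp, \rho)} \frac{\varphi(n)}n \le \#N^*_\gam(\alp,\rho) \le \#N_\gam(\alp,\rho) \ll \rho N,
\]
which is exactly the bound \eqref{UpperCount} specialised to our range (note $N^{-2\eps} \le \rho \le N^{-\eps}$ is well inside the good range \eqref{GoodRange}). Combined with the lower bound $\sum_{n \in N^*_\gam(\alp, \rho)} \varphi(n)/n \gg \rho N$ from the preceding Lemma, this yields the claimed asymptotic $\asymp \rho N$.

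There is no serious obstacle: the entire content is the interplay between the lower bound obtained via the generalised arithmetic progression construction (which is the substantive new input of this section) and the upper bound on the Bohr-set size from~\cite{BHV2016}. The only care needed is verifying that the non-Liouville assumption, encoded through $\lam < \infty$ and the exponent $\eps = 1/(10\lam)$, forces the continued-fraction denominators to cover the window $[(4\rho)^{-1}, N]$; this is immediate from \eqref{PowerBound} as shown above. Finally, the implicit constants depend only on $\alp$ since the implicit constants in Lemma~\ref{L1} and in the application of \eqref{PowerBound} depend only on $\alp$ (via $\lam$ and the partial quotients entering $\|q_2\alp\|$).
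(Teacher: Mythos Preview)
Your proposal is correct and follows essentially the same argument as the paper: the lower bound is the preceding lemma, and the upper bound comes from the trivial estimate $\varphi(n)/n \le 1$ together with the cardinality bound \eqref{UpperCount}, which in turn is derived from Lemmas~\ref{L61} and~\ref{L63} after checking via \eqref{epsilon} and \eqref{PowerBound} that some $q_\ell$ lies in $[(4\rho)^{-1},N]$. The only difference is that you have spelled out the verification of this last point more explicitly than the paper does.
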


\section{Sums of reciprocals of fractional parts}
\label{fractional}

As in \cite{BHV2016}, an essential part of the analysis is to estimate inhomogeneous variants of sums of reciprocals of fractional parts. Recall that we fixed real numbers $\alp$ and $\gam$, with $\alp$ irrational and not Liouville, from the beginning. A restriction on the range of summation will enable us to go beyond what was shown in \cite{BHV2016}. Let
\[
B = \{ n \in \bN: \| n \alp - \gam \| < n^{-4 \eps} \}.
\]
We consider the sums
\[
T_N(\alp, \gam) := \sum_{\substack{n \le N \\ n \notin B}} \frac1{\| n \alp - \gam\|}, \qquad  T_N^*(\alp, \gam) := \sum_{\substack{n \le N \\ n \notin B}} \frac{\varphi(n)}{n \| n \alp - \gam\|},
\]
and show that
\begin{equation} \label{order}
T_N(\alp, \gam) \asymp  T_N^*(\alp, \gam)  \asymp N \log N.
\end{equation}

We begin with an upper bound.

\begin{lemma} \label{upper} We have
\[
T_N(\alp, \gam) \ll N \log N.
\]
\end{lemma}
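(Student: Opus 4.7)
The plan is to use a standard dyadic decomposition in the variable $\|n\alp - \gam\|$ and invoke the upper bound \eqref{UpperCount} on the size of each Bohr shell. The restriction $n \notin B$ will truncate the dyadic sum at a logarithmic number of scales, which yields the desired $\log N$ factor.

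First I would partition the relevant indices according to the size of $\|n\alp - \gam\|$. For each integer $k \ge 0$, let
\[
S_k = \Bigl\{ n \le N : n \notin B, \ 2^{-k-1} \le \| n \alp - \gam\| < 2^{-k} \Bigr\},
\]
so that
\[
T_N(\alp, \gam) \le \sum_{k \ge 0} 2^{k+1} \# S_k.
\]
The key observation is that if $n \in S_k$ then $2^{-k} > \|n\alp - \gam\| \ge n^{-4\eps} \ge N^{-4\eps}$, hence only scales with $k \le 4\eps \log_2 N + O(1)$ contribute. In particular there are $O(\log N)$ values of $k$ to sum over.

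Next I would apply the Bohr set upper bound to each $S_k$. Clearly $S_k \subseteq N_\gam(\alp, 2^{-k})$. For $2^{-k} \le \eta$, the bound \eqref{UpperCount} (which in turn rests on Lemmas \ref{L61} and \ref{L63}) gives
\[
\# S_k \le \# N_\gam(\alp, 2^{-k}) \ll 2^{-k} N,
\]
where we have used that $2^{-k} \ge N^{-4\eps}$ places us in the admissible range \eqref{GoodRange}. For the finitely many $k$ with $2^{-k} > \eta$, the same estimate follows trivially from $\# S_k \le N$ and $2^{-k} \asymp 1$. Consequently $2^{k+1} \# S_k \ll N$ uniformly in $k$.

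Summing over the $O(\log N)$ admissible scales yields
\[
T_N(\alp, \gam) \ll \sum_{0 \le k \le 4\eps \log_2 N + O(1)} N \ll N \log N,
\]
which is the claimed bound. There is no real obstacle here: the argument is essentially a mechanical combination of the dyadic decomposition with \eqref{UpperCount}, and the role of the set $B$ is precisely to supply the upper cutoff on $k$ so that the Bohr set estimate \eqref{UpperCount} applies throughout. The tightness of \eqref{UpperCount} then ensures that each dyadic shell contributes $O(N)$ and that the total cost is only a single factor of $\log N$.
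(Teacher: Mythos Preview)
Your proof is correct and follows essentially the same approach as the paper: a dyadic decomposition in $\|n\alp - \gam\|$, with each shell bounded by $O(N)$ via \eqref{UpperCount}, and the restriction $n \notin B$ limiting the number of scales to $O(\log N)$. The paper organises the same idea slightly differently, first splitting off the range $\|n\alp - \gam\| > \eta$ as a trivial $O(N)$ contribution and then dyadically decomposing the remaining range $[N^{-4\eps},\eta]$, but the substance is identical.
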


\begin{proof}
Let $\eta$ be as it is in \eqref{GoodRange}. Now
\[
T_N(\alp, \gam) \le T_1 + T_2,
\]
where
\[
T_1 = \sum_{\substack{n \le N \\ N^{-4\eps}      \le        \| n \alp - \gam \|   \le \eta}} \frac1{\| n \alp - \gam\|}, \\
\]
and
\[
T_2 = \sum_{\substack{n \le N \\ \eta    <        \| n \alp - \gam \|   \le 1}} \frac1{\| n \alp - \gam\|}.
\]
A trivial estimate gives $T_2 \ll N$, so it remains to show that
\begin{equation} \label{remains}
T_1 \ll N \log N.
\end{equation}

We can decompose $T_1$ into at most $O(\log N)$ sums of the form
\[
\sum_{n \in N_\gam(\alp, \rho) \setminus N_\gam(\alp, \rho/2)} \frac1{ \| n \alp - \gam \|}.
\]
For each, we have \eqref{GoodRange} and therefore \eqref{UpperCount}. Hence
\[
\sum_{n \in N_\gam(\alp, \rho) \setminus N_\gam(\alp, \rho/2)} \frac1{ \| n \alp - \gam \|} \ll
\frac{ |N_\gam(\alp, \rho)| } {\rho} \ll N.
\]
Since there are $O(\log N)$ such ranges at most, we obtain \eqref{remains}, completing the proof.
\end{proof}

We also require a lower bound for $T_N^*(\alp, \gam)$. 

\begin{lemma} \label{lower} We have
\[
T_N^*(\alp, \gam) \gg N \log N.
\]
\end{lemma}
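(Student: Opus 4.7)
The plan is to exploit Corollary \ref{RightSize} via a coarse dyadic decomposition of the range of $\|n\alp - \gam\|$, in the same spirit as the proof of Lemma \ref{upper} but using both sides of the corollary.

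First, I would restrict the sum to $n \in [N/20, N]$ with $\|n\alp - \gam\| \in (N^{-2\eps}, N^{-\eps}]$. On this range the condition $n \notin B$ is automatic for large $N$, since $n \ge N/20$ gives $n^{-4\eps} \ll N^{-4\eps} < N^{-2\eps} < \|n\alp - \gam\|$. This yields
\[
T_N^*(\alp,\gam) \ge \sum_{\substack{N/20 \le n \le N \\ N^{-2\eps} < \|n\alp-\gam\| \le N^{-\eps}}} \frac{\varphi(n)}{n\,\|n\alp-\gam\|}.
\]

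Next, for a constant $M$ to be chosen, set $\rho_k = N^{-\eps} M^{-k}$ for $k = 0,1,\ldots,K$, where $K$ is the largest integer with $\rho_K \ge N^{-2\eps}$; note $K \asymp \log N$. Writing $g(\rho) := \sum_{n \in N^*_\gam(\alp,\rho)} \varphi(n)/n$, on each dyadic shell
\[
A_k = \{\, n \in [N/20,N] : \rho_{k+1} < \|n\alp-\gam\| \le \rho_k \,\}
\]
we have $1/\|n\alp-\gam\| \ge 1/\rho_k$, hence
\[
\sum_{n \in A_k} \frac{\varphi(n)/n}{\|n\alp-\gam\|} \;\ge\; \frac{g(\rho_k) - g(\rho_{k+1})}{\rho_k}.
\]

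Corollary \ref{RightSize} supplies constants $0 < c \le C$, depending only on $\alp$, such that $c\rho N \le g(\rho) \le C\rho N$ throughout the relevant range. Consequently
\[
g(\rho_k) - g(\rho_{k+1}) \;\ge\; c\rho_k N - C\rho_{k+1} N \;=\; \Bigl(c - \frac{C}{M}\Bigr)\rho_k N.
\]
Choosing $M > 2C/c$ (a fixed constant depending only on $\alp$) makes each shell contribute at least $(c/2)N$. Summing over $k = 0, 1, \ldots, K-1$ gives
\[
T_N^*(\alp,\gam) \;\gg\; KN \;\gg\; N\log N,
\]
as required.

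The only subtlety is the choice of $M$: the dyadic ratio must be taken large enough so that the lower bound for $g(\rho_k)$ dominates the upper bound for $g(\rho_{k+1})$. This is painless because the implicit constants in Corollary \ref{RightSize} depend only on $\alp$, independent of $k$ and $N$, so a single constant $M$ works uniformly. All other ingredients—the range restriction, the shelling, and the comparison $1/\|n\alp-\gam\| \ge 1/\rho_k$—are routine.
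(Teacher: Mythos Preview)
Your proof is correct and follows essentially the same approach as the paper: restrict to $n\in[N/20,N]$ with $\|n\alp-\gam\|\in[N^{-2\eps},N^{-\eps}]$ (so $n\notin B$ automatically), decompose this range into $\asymp\log N$ geometric shells with a fixed ratio chosen small enough that the lower bound from Corollary~\ref{RightSize} on the outer set beats the upper bound on the inner set, and sum the resulting $\gg N$ contributions. Your constant $M$ is precisely the paper's $1/\del$.
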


\begin{proof}
First observe that if $N/20 \le n \le N$ and $\| n \alp - \gam \| \ge N^{-2 \eps}$ then $n \notin B$. It therefore suffices to prove that
\begin{equation} \label{suffices}
\sum_{\substack{N/20 \le n \le N \\ 
N^{-2 \eps} \le  \| n \alp - \gam \| \le N^{-\eps}} } \frac{\varphi(n)}{n \| n \alp - \gam\|} \gg N \log N.
\end{equation}

Let $\del$ be a positive constant, small in terms of $\alp$. There are at least a constant times $\log N$ disjoint subintervals $(\del \rho, \rho]$ of the interval $[N^{-2 \eps}, N^{-\eps}]$, and for each we have the bound
\begin{align*}
\sum_{n \in N^*_\gam(\alp, \rho) \setminus N^*_\gam(\alp, \del \rho)} \frac{\varphi(n)} {n \| n \alp - \gam \|} 
&\ge \rho^{-1}  \sum_{n \in N^*_\gam(\alp, \rho) \setminus N^*_\gam(\alp, \del \rho)} \frac{\varphi(n)} n \\
&= \rho^{-1} \Biggl( \sum_{n \in N^*_\gam(\alp, \rho)} \frac{\varphi(n)} n -  \sum_{n \in N^*_\gam(\alp, \del \rho)} \frac{\varphi(n)} n \Biggr).
\end{align*}
As $\del$ is small in terms of the constants implicit in Corollary \ref{RightSize}, it now follows readily from Corollary \ref{RightSize} that
\[
\sum_{n \in N^*_\gam(\alp, \rho) \setminus N^*_\gam(\alp, \del \rho)} \frac{\varphi(n)} {n \| n \alp - \gam \|} \gg N.
\]
We have this for each of the disjoint subintervals, and so we have \eqref{suffices}.
\end{proof}

Note from the definitions that $T_N(\alp, \gam) \ge T_N^*(\alp, \gam)$. By Lemmas \ref{upper} and \ref{lower}, we now know \eqref{order}.

\section[The Duffin--Schaeffer theorem]{An application of the Duffin--Schaeffer theorem}
\label{DS}

In this section, we finish the proof of Theorem \ref{main}. The overall strategy is to apply the Duffin--Schaeffer theorem (Theorem \ref{DSthm}) to the approximating function
\[
\Psi(n) = \Psi_\alp^\gam(n) = \begin{cases} \frac{\psi(n)}{\| n \alp - \gam \|}, &\text{if }  n \notin B \\
0, &\text{if } n \in B.
\end{cases}
\]
A valid application of the Duffin--Schaeffer theorem will complete the proof, so we need only verify its hypotheses, namely
\begin{equation} \label{hyp1}
\sum_{n=1}^\infty \frac {\varphi(n)}n \Psi(n) = \infty
\end{equation}
and
\begin{equation} \label{hyp2}
\sum_{n \le N} \frac {\varphi(n)}n \Psi(n) \gg \sum_{n \le N} \Psi(n).
\end{equation}
The inequality \eqref{hyp2} is only needed for an infinite strictly increasing sequence of positive integers $N$, but we shall prove \emph{a fortiori} that for all large $N$ we have
\begin{equation} \label{hyp21}
\sum_{n \le N} \frac {\varphi(n)}n \Psi(n)  \gg  \sum_{n \le N} \psi(n) \log n
\end{equation}
and
\begin{equation} \label{hyp22}
\sum_{n \le N} \Psi(n) \ll \sum_{n \le N} \psi(n) \log n.
\end{equation}
Observe, moreover, that \eqref{divergence} and \eqref{hyp21} would imply \eqref{hyp1}. The upshot is that it remains to prove \eqref{hyp21} and \eqref{hyp22}.

Recall the sums $T_N(\alp, \gam)$ and $T_N^*(\alp, \gam)$ from \S \ref{fractional}, and let $N_0 \in \bN$ be a large constant. By partial summation \cite[Eq. (1.2)]{BHV2016}, and using the fact that 
\[
\psi(n) \ge \psi(n+1),
\]
we have the lower bound
\[
\sum_{n \le N}  \frac {\varphi(n)}n \Psi(n) \ge
\psi(N+1) T_N^*(\alp, \gam) + \sum_{n = N_0}^N (\psi(n) - \psi(n+1)) T_n^*(\alp,\gam).
\]
Applying Lemma \ref{lower} to continue our calculation yields
\[
\sum_{n \le N}  \frac {\varphi(n)}n \Psi(n) \gg 
\psi(N+1) N \log N + \sum_{n = N_0}^N (\psi(n) - \psi(n+1)) n \log n.
\]
As $\psi(n) \ge \psi(n+1)$ and $\sum_{m \le n} \log m \le n \log n$, we now have
\[
\sum_{n \le N}  \frac {\varphi(n)}n \Psi(n) \gg \psi(N+1) \Bigl(\sum_{m \le N} \log m \Bigr) + \sum_{n = N_0}^N (\psi(n) - \psi(n+1)) \sum_{m \le n} \log m.
\]
Another application of partial summation now gives
\[
\sum_{n \le N}  \frac {\varphi(n)}n \Psi(n) \gg \sum_{n = N_0}^N \psi(n) \log n,
\]
establishing \eqref{hyp21}.

We come to the final piece of the puzzle, which is \eqref{hyp22}. By partial summation, we have
\[
\sum_{n \le N}  \Psi(n)  = \psi(N+1)T_N(\alp, \gam)
+ \sum_{n \le N} (\psi(n) - \psi(n+1)) T_n(\alp,\gam).
\]
Observe that if $n \le N_0$ then $T_n(\alp, \gam) \le T_{N_0}(\alp, \gam) \ll 1$. Thus, applying Lemma \ref{upper} to continue our calculation yields
\[
\sum_{n \le N}  \Psi(n) \ll 1 + \psi(N+1)N \log N + \sum_{n = N_0}^N (\psi(n) - \psi(n+1)) n \log n.
\]
Partial summation tells us that $\sum_{m \le n} \log m \gg n \log n$, and so
\[
\sum_{n \le N}  \Psi(n) \ll 1 + \psi(N+1) \Bigl(\sum_{m \le N} \log m \Bigr) + \sum_{n = N_0}^N (\psi(n) - \psi(n+1)) \sum_{m \le n} \log m.
\]
A further application of partial summation now gives
\[
\sum_{n \le N}  \Psi(n) \ll 1 + \sum_{n = N_0}^N \psi(n) \log n.
\]
This confirms \eqref{hyp22}, thereby completing the proof of Theorem \ref{main}.


\providecommand{\bysame}{\leavevmode\hbox to3em{\hrulefill}\thinspace}

\end{document}